\newtheorem{ithm}{Theorem}
\newtheorem{iprop}{Proposition}
\newtheorem{theorem}{Theorem}[section]
\newtheorem{definition}[theorem]{Definition}
\newtheorem{lemma}[theorem]{Lemma}
\newtheorem{remark}[theorem]{Remark}
\newtheorem{corollary}[theorem]{Corollary}
\newtheorem{proposition}[theorem]{Proposition}
\newtheorem{notation}[theorem]{Notation}
\def\leftnote#1{\vadjust{\setbox1=\vtop{\hsize 30mm\parindent=0pt\bf\baselineskip=9pt\rightskip=4mm plus 4mm#1}\hbox{\kern-3cm\smash{\box1}}}}
\newenvironment{proof}[1][Proof]{\par\addvspace{2mm}\noindent\textbf{#1.} }{\ \rule{0.5em}{0.5em}\par\vspace{4mm}}
\newenvironment{proof1}[1][Proof of Theorem~\ref{MainTheorem}]{\par\addvspace{2mm}\noindent\textbf{#1.} }{\ \rule{0.5em}{0.5em}\par\vspace{4mm}}
\newenvironment{proof2}[1][Proof of Proposition~\ref{LT_div_pt_thm}]{\par\addvspace{2mm}\noindent\textbf{#1.} }{\ \rule{0.5em}{0.5em}\par\vspace{4mm}}
\newenvironment{proof3}[1][Proof of Proposition~\ref{thm3}]{\par\addvspace{2mm}\noindent\textbf{#1.} }{\ \rule{0.5em}{0.5em}\par\vspace{4mm}}
\newenvironment{proof4}[1][Proof of Theorem~\ref{Thm4}]{\par\addvspace{2mm}\noindent\textbf{#1.} }{\ \rule{0.5em}{0.5em}\par\vspace{4mm}}
\newcommand{\Q}{\mathbb{Q}}
\newcommand{\Z}{\mathbb{Z}}
\newcommand{\F}{\mathbb{F}}
\newcommand{\A}{{\mathcal A}}
\newcommand{\W}{{\mathcal W}}
\newcommand{\bo}{{\mathcal O}}
\newcommand{\D}{{\mathfrak D}}
\newcommand{\bp}{{\mathcal P}}
\newcommand{\C}{\mathbb{C}}
\DeclareMathOperator{\Gal}{Gal}
\DeclareMathOperator{\End}{End}
\begin{document}
 \bibliographystyle{plain}  
 \title{Formal group exponentials and Galois modules in Lubin-Tate extensions}
 \author{Erik Jarl Pickett and Lara Thomas}
 \maketitle

\abstract{
Explicit descriptions of local integral Galois module generators in certain extensions of $p$-adic fields due to Pickett have recently been used to make progress with open questions on integral Galois module structure in wildly ramified extensions of number fields. In parallel, Pulita has generalised the theory of Dwork's power series to a set of power series with coefficients in Lubin-Tate extensions of $\Q_p$ to establish a structure theorem for rank one solvable p-adic differential equations.

In this paper we first generalise Pulita's power series using the theories of formal group exponentials and ramified Witt vectors. Using these results and Lubin-Tate theory, we then generalise Pickett's constructions in order to give an analytic representation of integral normal basis generators for the square root of the inverse different in all abelian totally, weakly and wildly ramified extensions of a p-adic field. Other applications are also exposed.}
\section*{Introduction}
The main motivation for this paper came from new progress in the theory of Galois module structure. Indeed, explicit descriptions of local integral Galois module generators due to Erez~\cite{erez} and Pickett~\cite{Pickett} have recently been used to make progress with open questions on integral Galois module structure in wildly ramified extensions of number fields (see \cite{PickettVinatier} and~\cite{Vinatier_jnumb}). 
\vskip1mm
Precisely, let~$p$ be a prime number. Pickett, generalising work of Erez, has constructed normal basis generators for the square root of the inverse different in degree~$p$ extensions of any unramified extension of $\Q_p$.  His constructions were obtained by using special values of Dwork's power series.  Moreover, they have recently been used by Pickett and Vinatier~\cite{PickettVinatier} to prove that the square root  of the inverse different of $E/F$ is free over $\Z[G]$ under certain conditions on both the decomposition groups of $G$ and the base field $F$, when $E/F$ is a finite odd degree Galois extension with group~$G$.
\vskip1mm
In parallel, Pulita has generalised the theory of Dwork's power series to a set of power series with coefficients in Lubin-Tate extensions of $\Q_p$ in order to classify rank one $p$-adic solvable differential equations~\cite{Pulita}.
\vskip1mm

Our main goal was to generalise Erez and Pickett's construction in order to give explicit descriptions of integral normal basis generators for the square root of the inverse different in all abelian totally, weakly and wildly ramified extensions of a $p$-adic field. In this paper, our goal is totally achieved using a combination of several tools~: formal group exponentials, Lubin-Tate theory, and the theory of ramified Witt vectors. This leads us to generalise Pulita's formal power series to power series with coefficients in Lubin-Tate extensions of any finite extension of $\Q_p$. 
\vskip1mm
At the same time, we also get explicit generators for the valuation ring over its associated order, in maximal abelian totally, weakly and wildly ramified extensions of any $p$-adic field.

\

\noindent
{\bf Notation.} Let $p$ be a rational prime, and let $\Q_p$ be the field of $p$-adic numbers, $\bar{\Q}_p$ be a fixed algebraic closure of $\Q_p$ and $\C_p$ be the completion of $\bar{\Q}_p$ with respect to the $p$-adic absolute value. We let $v_p$ and $|\ |_p$ be the normalised $p$-adic valuation and absolute value on $\mathbb{C}_p$ such that $v_p(p)=1$ and $|x|_p=p^{-v_p(x)}$. 
As $v_p$ and $|\ |_p$ are completely determined by each other, either can be used in the statement of results;
we will use the valuation $v_p$ as this is the convention in the literature on Galois modules in Lubin-Tate extensions.
\vskip1mm
Throughout this paper, for any extension $K/\Q_p$ considered we will always assume $K$ is contained in $\bar{\Q}_p$ and
  we will denote by $\bo_K$, $\bp_K$ and $k$ its valuation ring, maximal ideal and residue field 
  respectively. We identify the residue field of $\Q_p$ with the field of $p$~elements, $\F_p$. 
  For any $n\in\mathbb{Z}_{>0}$, we denote by $\mu_n$ the
group of $n$th roots of unity contained in~$\bar{\Q}_p^{\times}$.

\

\noindent
{\bf Presentation of the paper.}  Let $\gamma\in\bar{\Q}_p$ be a root of the polynomial $X^{p-1}+p$. Dwork's exponential power series with respect to~$\gamma$ is defined as
$$E_{\gamma}(X)=\exp(\gamma (X-X^p))\enspace \in 1+X \Z_p[[X]],$$
where the right hand side is the composition of the two power series $\gamma (X-X^p)$ and $\exp(X)$.
Dwork's power series is \textit{over-convergent}, in the sense that it converges with respect to $|\ |_p$ 
 on an open disc $\{x\in{\mathbb C}_p:v_p(x)>c \}$ for some $c<0$ (\cite{LangII}, Chap.~14, \S 2, Remark after Lem.~2.2); it also has the property that $E_{\gamma}(1)$ is equal to a primitive $p$th root of unity in $\bar{\Q}_p$~(\cite{LangII}, Chap.~14, \S 3, Thm~3.2).
 
\vskip1mm
Dwork's power series was recently generalised by Pulita~\cite{Pulita} to a set of power series
with coefficients in Lubin-Tate
extensions of $\Q_p$: Let $f(X)\in\Z_p[X]$ be some Lubin-Tate
polynomial with respect to the uniformising parameter $p$,
\textit{i.e.},
$$P(X)\equiv X^p\mod p\Z_p[X] \text{\ \ \ and\ \ \ } P(X)\equiv pX\mod
X^2\Z_p[X]\enspace.$$  
Let $\{\omega_i\}_{i>0}$ be a \textit{coherent set of roots}
associated to $P(X)$, namely a sequence of elements of $\bar\Q_p$ such
that $\omega_1\ne0$, $P(\omega_1)=0$ and $P(\omega_{i+1})=\omega_{i}$; we refer to $\omega_n$ as an $n$th Lubin-Tate division point with respect to $P$. For $n\in\Z_{>0}$, Pulita defines the exponentials 
$$E_{P,n}(X)=\exp\left(\sum_{i=0}^{n-1}\frac{\omega_{n-i}(X^{p^i}-X^{p^{i+1}})}{p^{i}}\right) \in 1+X\Z_p[\omega_{n}][[X]]\enspace.$$ 
\vskip1mm
This generalises Dwork's power series as $E_{P,1}(X)=E_{\gamma}(X)$ when $P(X)=X^p+pX$.
For all choices of $P(X)$ and $n$, the power series $E_n(X)$ is over-convergent and has the property that $E_n(1)$ is a primitive $p^n$th root of unity $\zeta_{p^n}$ in $\bar{\Q}_p$. Comparing degrees then shows us that $\Q_p(\omega_n)=\Q_p(\zeta_{p^n})$ for all $n$ and all choices of $P(X)$.
We remark that this result is also a consequence of basic Lubin-Tate theory, see \cite{Lubin_Tate} or \cite{serre-lubintate} for details of this theory.

\

In this paper, we first generalise Pulita's exponentials to power series with coefficients in Lubin-Tate extensions of any finite extension~$K$  of $\Q_p$, in particular by combining Fr\"ohlich's notion of a formal group exponential (\cite{Frohlich}, Chap.~IV, \S 1) with the theory of ramified Witt vectors. Note that we impose no other restrictions on our base field $K$ and no restrictions on the uniformising parameter used to construct the Lubin-Tate extensions of~$K$. Inspired by the methods of Pulita, we prove the following core result of the paper~:

 \begin{ithm}\label{MainTheorem}
Let $K$ be a finite extension of $\Q_p$,
with valuation ring, maximal ideal and residue field denoted by $\bo_K$, $\mathcal P_K$ and $k$ respectively. Let $q=\text{card}(k)$ for some power $q$ of $p$.

Let $\pi$ and $\pi'$ be two uniformising parameters for $\bo_K$, and let $P,Q \in \bo_K[[X]]$ be Lubin-Tate polynomials with respect to $\pi$ and $\pi'$ respectively, \textit{i.e.},
$$P(X)\equiv X^q\mod \pi\bo_K[X] \text{\ \ \ and\ \ \ } P(X)\equiv \pi X\mod
X^2\bo_K[X]\enspace.$$ 

We write $F_P \in \bo_K[[X,Y]]$ for the unique formal group that admits $P$ as an endomorphism and $\exp_{F_P}\in XK[[X]]$ for the unique power series such that
$$\exp_{F_P}(X+Y)=F_P(\exp_{F_P}(X),\exp_{F_P}(Y)).$$
Let $\{\omega_i\}_{i> 0}$ be a coherent set of roots associated to $Q$.
\begin{enumerate}
\item
For every $n\geq 1$, the formal power series 
$$\mathcal E_{P,n}^{Q}(X):=\exp_{F_P}\left(\sum_{i=0}^{n-1}\frac{\omega_{n-i}(X^{q^i}-X^{q^{i+1}})}{\pi^i}\right)$$
lies in $X\bo_K[\omega_n][[X]]$, and is over-convergent if $\pi\equiv\pi'\mod \mathcal P_{K}^{n+1}$.
\item Moreover, we have the congruence $\mathcal{E}_{P,n}^{Q}(X)\equiv \omega_nX\mod \omega_n^2X\bo_K[\omega_n][[X]]$.
\end{enumerate}
\end{ithm}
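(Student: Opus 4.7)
Write $f_n(X):=\sum_{i=0}^{n-1}\omega_{n-i}(X^{q^i}-X^{q^{i+1}})/\pi^i$ for the argument of $\exp_{F_P}$ in the definition of $\mathcal E_{P,n}^Q(X)$, with the convention $f_0:=0$. I plan to prove both parts by simultaneous induction on $n$, starting from the telescoping identity
$$f_n(X) \;=\; \omega_n(X-X^q) \;+\; \frac{1}{\pi}\, f_{n-1}(X^q),$$
obtained by an immediate reindexing of the defining sum. Passed through $\exp_{F_P}$, and using the Lubin--Tate relation $[\pi]_{F_P}=P$, this rewrites as the recursion
$$\mathcal E_{P,n}^Q(X) \;=\; \exp_{F_P}\bigl(\omega_n(X-X^q)\bigr) \;+_{F_P}\; Z_n(X), \qquad P\bigl(Z_n(X)\bigr)=\mathcal E_{P,n-1}^Q(X^q),$$
where $Z_n(X):=\exp_{F_P}(f_{n-1}(X^q)/\pi)$ is the $P$-preimage of $\mathcal E_{P,n-1}^Q(X^q)$ normalized by $Z_n(0)=0$. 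The base case $n=1$ amounts to the integrality of $\exp_{F_P}(\omega_1(X-X^q))$, which is the natural Lubin--Tate generalization of Dwork's classical $E_\gamma(X)$ and yields to the same Dieudonn\'e--Dwork-style argument that drives the induction.

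The central tool for part (1) is a Dieudonn\'e--Dwork-type integrality criterion for formal-group exponentials: for $h(X)\in XK[[X]]$, $\exp_{F_P}(h(X))\in X\bo_K[[X]]$ whenever $h(X)-(1/\pi)h(X^q)\in X\bo_K[[X]]$ together with the natural compatibility expressed by the ramified Witt-vector structure. I expect this criterion to have been established in earlier sections of the paper using Fr\"ohlich's formal-group exponential machinery and ramified Witt vectors, as advertised in the introduction. Granted it, the telescoping identity rearranges to $f_n(X)-(1/\pi)f_{n-1}(X^q)=\omega_n(X-X^q)\in\bo_K[\omega_n][X]$, so the inductive hypothesis on $f_{n-1}$ combines with this manifestly integral new term to verify the criterion at level $n$. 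Over-convergence under $\pi\equiv\pi'\bmod\mathcal P_K^{n+1}$ follows from explicit valuation bounds on the coefficients of $\mathcal E_{P,n}^Q$ obtained by comparing $F_P$ with $F_Q$ via the canonical Lubin--Tate isomorphism (whose deviation from the identity is controlled by $\pi-\pi'$); the congruence on uniformizers at depth $n+1$ is exactly what is needed at level $n$ to guarantee sub-unit valuations of the coefficients and so push the radius of convergence strictly beyond $1$. For part (2), since $f_n(X)\equiv\omega_n X\pmod{X^q}$, $\exp_{F_P}(Y)=Y+O(Y^2)$ already delivers the $X$-coefficient of $\mathcal E_{P,n}^Q$ as $\omega_n$; the higher-order divisibility by $\omega_n^2$ propagates through the formal-group sum $U+_{F_P}V=U+V+O(UV)$ by solving the $P$-preimage equation for $Z_n$ modulo $\omega_n^2$, using the inductive hypothesis $\mathcal E_{P,n-1}^Q\equiv\omega_{n-1}X\bmod\omega_{n-1}^2X\bo_K[\omega_{n-1}][[X]]$ together with the defining relation $Q(\omega_n)=\omega_{n-1}$.

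The main obstacle is the Dieudonn\'e--Dwork-type integrality criterion for $\exp_{F_P}$. Adapting Dwork's classical lemma---originally formulated for the multiplicative formal group with Frobenius acting on coefficients---to an arbitrary Lubin--Tate $F_P$ over an arbitrary finite extension $K$ of $\Q_p$, and especially to the \emph{mixed} setting where the torsion points $\omega_n$ come from a different Lubin--Tate group $F_Q$ attached to a possibly distinct uniformizer, is the principal subtlety, and is essentially what motivates the development of ramified Witt vectors earlier in the paper. Once this criterion is in hand, the remaining inductive work is largely mechanical bookkeeping with valuations.
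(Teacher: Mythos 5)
Your telescoping identity $f_n(X)=\omega_n(X-X^q)+\tfrac{1}{\pi}f_{n-1}(X^q)$ is correct, but the two ways you try to exploit it for integrality both break down, and the paper's actual mechanism is structurally different.

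First, the integrality criterion coming from Hazewinkel's functional equation lemma applies to $\exp_{F_P}(h(X))$ when $h(X)-\tfrac{1}{\pi}h(X^q)\in X\bo_K[\omega_n][[X]]$ --- i.e.\ $h$ must satisfy the functional equation \emph{against itself}. Your telescoping identity instead compares $f_n(X)$ against $f_{n-1}(X^q)$, which is a different series. Computing $f_n(X)-\tfrac{1}{\pi}f_n(X^q)$ directly, the innermost shift produces a term $-\omega_1(X^{q^n}-X^{q^{n+1}})/\pi^n$, whose coefficient has $p$-valuation $\tfrac{1}{e_K(q-1)}-\tfrac{n}{e_K}<0$. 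So the criterion you want is simply false for $f_n$, already at $n=1$ for $q\ge 3$. Second, your decomposition $\mathcal E_{P,n}^Q=\exp_{F_P}(\omega_n(X-X^q))+_{F_P}Z_n(X)$ with $Z_n=\exp_{F_P}(f_{n-1}(X^q)/\pi)$ does not help either, because $Z_n$ is itself non-integral: the argument $f_{n-1}(X^q)/\pi$ has leading coefficient $\omega_{n-1}/\pi$ of strictly negative valuation, so you cannot argue the integrality of the two $F_P$-summands separately. Only their sum is integral, which is exactly the thing you are trying to prove.

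The paper does something quite different. Rather than descending to $\omega_{n-1}$, it ascends to $\omega_{n+1}$: using the Witt-vector multiplicativity one checks that $f_n(X)$ is the ghost-logarithm of $\pi\omega_{n+1}X$ plus the generalised Artin--Hasse series $E_P\bigl(S_{Q,\omega_{n+1}}(h)\lambda,X\bigr)$, with $h(X)=X$ and $\lambda=S_{Q,\omega_{n+1}}\bigl(\tfrac{Q(X)}{X}-\pi\bigr)$. The integrality of $E_P(\mu,X)$ for any Witt vector $\mu$ is obtained by applying the functional equation lemma to the \emph{infinite} series $X+X^q/\pi+X^{q^2}/\pi^2+\cdots$, which satisfies the functional equation exactly, unlike your truncated $f_n$; the finite decomposition of Lemma~\ref{DecE_P} then writes $E_P(\mu,X)$ as a finite $F_P$-sum of integral pieces. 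The constant term of $\tfrac{Q(X)}{X}-\pi$ is $\pi'-\pi$, and Proposition~\ref{KeyProp} converts the condition $\pi\equiv\pi'\bmod\mathcal P_K^{n+1}$ into the statement that $v_p(\lambda_i)>0$ for $i\le n$, which is what drives over-convergence via Proposition~\ref{EPOverConv}. Your sketch of over-convergence via ``comparing $F_P$ and $F_Q$ through the canonical Lubin--Tate isomorphism'' is not the mechanism used and does not obviously produce the stated coefficient bounds. Since Part~2's congruence argument is built on the integrality and decomposition of Part~1, the gap propagates there as well.
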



To compare to Pulita's result, we remark that if $K=\Q_p$, $\pi=\pi'=p$, and $P(X)=(X+1)^p-1$, then $\mathcal{E}^{Q}_{P,n}(X)=E_{Q,n}(X)-1$. 

\

We then apply Theorem~\ref{MainTheorem} to give two explicit results in Lubin-Tate theory. For each integer $n\geq1$, we denote by $K_{\pi,n}$ the $n$-th Lubin-Tate extension of $K$ with respect to $\pi$. This extension is abelian and totally ramified, with degree $q^{n-1}(q-1)$ and conductor~$n$. Let $P\in \bo_K[X]$ be a Lubin-Tate polynomial with respect to $\pi$. The extension $K_{\pi,n}/K$ is generated by any primitive $n$-th Lubin-Tate division point with respect to $P$,  \textit{i.e.}, any element $\omega \in \bar{\Q}_p$ such that $P^{(n)}(\omega)=0$ whereas $P^{(n-1)}(\omega)\not=0$.

\

As a first application of~Theorem~\ref{MainTheorem}, we give an analytic representation of Lubin-Tate division points as values of the power series $\mathcal{E}_{P,n}^Q(X)$ for all $n \geq 0$.
Precisely, keeping the same notation, we prove~:

\begin{iprop}\label{LT_div_pt_thm}
\begin{enumerate}
\item If  $\pi\equiv\pi'\mod\bp_K^{n+1}$, then $\mathcal{E}^{Q}_{P,m}(1)$ is a primitive $m$th Lubin-Tate division point with respect to $P$, for all integers $m$ with $0<m\le n$. 
\item If $\pi'=\pi$, then $\{\mathcal{E}^{Q}_{P,i}(1)\}_{i>0}$ is a coherent set of roots associated to $P(X)$.
\end{enumerate}

\end{iprop}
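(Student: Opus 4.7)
The plan is to derive a recursive functional equation for $\mathcal{E}^{Q}_{P,m}(X)$ and deduce the proposition by induction on $m$. Writing
\[
L_m(X):=\sum_{i=0}^{m-1}\frac{\omega_{m-i}(X^{q^i}-X^{q^{i+1}})}{\pi^i}
\]
for the inner argument of $\exp_{F_P}$ in the definition of $\mathcal{E}^{Q}_{P,m}$, a direct re-indexing of the summation variable yields
\[
\pi L_m(X)=L_{m-1}(X^q)+\pi\omega_m(X-X^q).
\]
Combining this with the Lubin--Tate intertwining property $P\circ\exp_{F_P}=\exp_{F_P}\circ(\pi\cdot)$ (a consequence of $P$ being an endomorphism of $F_P$ with derivative $\pi$ at the origin) and with the addition formula $\exp_{F_P}(A+B)=F_P(\exp_{F_P}(A),\exp_{F_P}(B))$, I obtain the formal power series identity
\[
P\bigl(\mathcal{E}^{Q}_{P,m}(X)\bigr)=F_P\Bigl(\mathcal{E}^{Q}_{P,m-1}(X^q),\,\exp_{F_P}\bigl(\pi\omega_m(X-X^q)\bigr)\Bigr).
\]

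The next step is to evaluate at $X=1$. Under the hypothesis $\pi\equiv\pi'\bmod\bp_K^{n+1}$, Theorem~\ref{MainTheorem} ensures that $\mathcal{E}^{Q}_{P,m}$ and $\mathcal{E}^{Q}_{P,m-1}$ over-converge for $1\le m\le n$, so both sides admit values at $X=1$. The left-hand side gives $P(\mathcal{E}^{Q}_{P,m}(1))$, and the first argument of $F_P$ on the right gives $\mathcal{E}^{Q}_{P,m-1}(1)$. For the second argument, isolating $\exp_{F_P}(\pi\omega_m(X-X^q))$ in the functional equation via the formal inverse of $F_P$ exhibits it as a power-series combination of two over-convergent series, hence it over-converges in $X$; since $(X-X^q)^i$ vanishes at $X=1$ for every $i\ge 1$, its expansion in powers of $(X-X^q)$ is termwise zero there, so its value at $X=1$ is $0$. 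Using $F_P(Y,0)=Y$, I obtain the recursion
\[
P(\mathcal{E}^{Q}_{P,m}(1))=\mathcal{E}^{Q}_{P,m-1}(1),\qquad 1\le m\le n.
\]

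Assertion~(1) follows by induction: iterating gives $P^{(m)}(\mathcal{E}^{Q}_{P,m}(1))=\mathcal{E}^{Q}_{P,0}(1)=\exp_{F_P}(0)=0$ and $P^{(m-1)}(\mathcal{E}^{Q}_{P,m}(1))=\mathcal{E}^{Q}_{P,1}(1)$. The congruence $\mathcal{E}^{Q}_{P,1}(X)\equiv\omega_1 X\bmod \omega_1^2 X\bo_K[\omega_1][[X]]$ from Theorem~\ref{MainTheorem}(2), applied at $X=1$, gives $\mathcal{E}^{Q}_{P,1}(1)\equiv\omega_1\bmod\omega_1^2\bo_K[\omega_1]$, a nonzero element of valuation $v_p(\omega_1)$. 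Hence $\mathcal{E}^{Q}_{P,m}(1)$ is a primitive $m$th Lubin--Tate division point for~$P$. Assertion~(2) is immediate: when $\pi'=\pi$ the congruence hypothesis holds for every $n$, and the recursion together with $P(\mathcal{E}^{Q}_{P,1}(1))=0$ and $\mathcal{E}^{Q}_{P,1}(1)\neq 0$ is exactly the definition of a coherent set of roots for~$P$.

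The main obstacle is the convergence bookkeeping justifying that the value of $\exp_{F_P}(\pi\omega_m(X-X^q))$ at $X=1$ is $0$. One must check that this composition of $\exp_{F_P}$ (which has only a finite radius of convergence in its argument) with a polynomial in $X$ over-converges; the formal-group isolation sketched above reduces this to over-convergences already guaranteed by Theorem~\ref{MainTheorem}, and the absolute convergence so obtained justifies the term-by-term evaluation.
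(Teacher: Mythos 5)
Your proof is correct and takes essentially the same route as the paper: derive the recursion $P(\mathcal{E}^{Q}_{P,m}(1))=\mathcal{E}^{Q}_{P,m-1}(1)$ from the Lubin--Tate intertwining property and the addition law for $\exp_{F_P}$, then induct. The only difference is presentational: where you keep the extra term as a single series $\exp_{F_P}(\pi\omega_m(X-X^q))$ and must then argue (via the formal-group isolation trick you sketch) that it over-converges and that the termwise $(X-X^q)$-expansion evaluates to $0$ at $X=1$, the paper instead immediately splits this term as $\exp_{F_P}(\pi\omega_m X)-_{F_P}\exp_{F_P}(\pi\omega_m X^q)$, so that each summand is a power series in $X$ (resp.\ $X^q$) whose convergence at $X=1$ is directly covered by Proposition~\ref{ExpFPConv}, and the two evaluations $\exp_{F_P}(\pi\omega_m)$ cancel under $-_{F_P}$. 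This makes the convergence bookkeeping you flag as ``the main obstacle'' entirely transparent and avoids the rearrangement subtlety, at the cost of one extra formal-group operation; otherwise the two arguments are the same. Your observation that $\mathcal{E}^{Q}_{P,0}(1)=\exp_{F_P}(0)=0$ lets you collapse the base case into the induction, which is a mild streamlining of the paper's treatment, where $m=1$ is handled separately.
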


Another application of Theorem~\ref{MainTheorem} is concerned with an explicit description of the action of the Galois group $\text{Gal}(K_{\pi,n}/K)$ over the Lubin-Tate extension $K_{\pi,n}$ for all $n\geq 1$. Indeed, since $\mathcal{E}_{P,n}^{Q}(1)$ is a primitive $n$th Lubin-Tate division point with respect to $P$, from standard theory (see \cite{Iwasawa}, \S  6-7, specifically Theorem 7.1) we know that the elements of $\Gal(K_{\pi,n}/K)$ are those automorphisms such that 
$\mathcal{E}_{P,n}^{Q}(1)\mapsto [u]_P(\mathcal{E}_{P,n}^{Q}(1))$, where $u$ runs over a set of representatives of $\bo_K^{\times}/(1+\bp_K^n)$ --- here $[u]_P(X)$ is a specific power series in $X\bo_K[[X]]$, see Section \ref{fgroups} for full details.
Therefore, the following proposition gives a complete description of $\Gal(K_{\pi,n}/K)$ in terms of values of our power series.

 \begin{iprop}\label{thm3}
 Let  $\pi\equiv\pi'\mod\bp_K^{n+1}$.
 For $0\le i\le n-1$, let $z_i\in\mu_{q-1}\cup\{0\}$ with $z_0\ne0$.
Then, 
$$[\sum_{i=0}^{n-1}z_i\pi^i]_P(\mathcal E^{Q}_{P,n}(1))=\mathcal E^{Q}_{P,n}(z_0)+_{F_P}\mathcal E^{Q}_{P,n-1}(z_1)+_{F_P}\ldots+_{F_P}\mathcal E^{Q}_{P,1}(z_{n-1})\enspace.$$
\end{iprop}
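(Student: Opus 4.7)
The strategy will be to reduce the action of $[u]_P$ to that of the scalars $[z_i]_P$ at specific division points by exploiting additivity of $[\cdot]_P$ with respect to $F_P$, and then to identify each piece via two formal power-series identities whose evaluation at $X=1$ is guaranteed by the over-convergence from Theorem~\ref{MainTheorem}.

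First I would invoke the Lubin--Tate identities $[a+b]_P(X)=[a]_P(X)+_{F_P}[b]_P(X)$, $[ab]_P=[a]_P\circ[b]_P$, and $[\pi]_P=P$ to write, setting $\omega:=\mathcal{E}^Q_{P,n}(1)$,
$$[u]_P(\omega)=[z_0]_P(\omega)+_{F_P}[z_1]_P(P(\omega))+_{F_P}\cdots+_{F_P}[z_{n-1}]_P(P^{\circ(n-1)}(\omega)).$$
It would then suffice to prove (A) $P(\mathcal{E}^Q_{P,m}(1))=\mathcal{E}^Q_{P,m-1}(1)$ for every $1\le m\le n$, and (B) $[z]_P(\mathcal{E}^Q_{P,m}(1))=\mathcal{E}^Q_{P,m}(z)$ for every $z\in\mu_{q-1}\cup\{0\}$ and $1\le m\le n$: iterating (A) yields $P^{\circ i}(\omega)=\mathcal{E}^Q_{P,n-i}(1)$, and substituting via (B) gives the stated formula.

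For (A), setting $L_m(X):=\sum_{i=0}^{m-1}\frac{\omega_{m-i}(X^{q^i}-X^{q^{i+1}})}{\pi^i}$, a direct rearrangement yields $\pi L_m(X)=L_{m-1}(X^q)+\pi\omega_m(X-X^q)$; combining this with $[\pi]_P\circ\exp_{F_P}=\exp_{F_P}(\pi\,\cdot)$ and the homomorphism property $\exp_{F_P}(Y_1+Y_2)=\exp_{F_P}(Y_1)+_{F_P}\exp_{F_P}(Y_2)$, one obtains the formal identity
$$P(\mathcal{E}^Q_{P,m}(X))=\mathcal{E}^Q_{P,m-1}(X^q)+_{F_P}\exp_{F_P}(\pi\omega_m(X-X^q)).$$
At $X=1$ the first summand is $\mathcal{E}^Q_{P,m-1}(1)$; the second vanishes, because the inner polynomial $\pi\omega_m(X-X^q)$ is integral with small values on $|X|\le 1$, the composed series is divisible by $X-X^q$, and the over-convergence machinery of Theorem~\ref{MainTheorem} applies. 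For (B), the case $z=0$ is trivial; for $z\in\mu_{q-1}$ the decisive input is $z^{q^i}=z$ for every $i\ge 0$, which forces $L_m(zX)=zL_m(X)$; combined with the intertwining $[z]_P\circ\exp_{F_P}(Y)=\exp_{F_P}(zY)$, this gives the formal identity $[z]_P(\mathcal{E}^Q_{P,m}(X))=\mathcal{E}^Q_{P,m}(zX)$, and evaluation at $X=1$ produces (B).

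I expect the main difficulty to lie in the evaluation at $X=1$ of the error term $\exp_{F_P}(\pi\omega_m(X-X^q))$ in (A): naive substitution into the inner polynomial gives $0$, but one must justify that this coincides with the value of the composed power series in $X$, which requires controlling the radius of convergence of $\exp_{F_P}$ against $v_p(\pi\omega_m)$. This subtlety is genuine --- it contrasts sharply with the case of $\mathcal{E}^Q_{P,n}(1)$ itself, where $L_n(1)=0$ yet the over-convergent composed series $\exp_{F_P}(L_n(X))$ evaluates at $X=1$ to the nontrivial division point $\omega$, precisely because the coefficients of $L_n$ have sufficiently negative $p$-adic valuation that naive substitution is not valid.
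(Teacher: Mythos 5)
Your proof is correct, but it takes a genuinely different and more modular route than the paper's. The paper manipulates the argument of $\exp_{F_P}$ directly: it pulls $\sum_j z_j\pi^j$ inside, splits the resulting sum into $n$ pieces using the additivity of $\exp_{F_P}$ and the invariance $z_j^{q^i}=z_j$, and then, for each piece, further splits off the ``overflow'' terms of the form $\exp_{F_P}(\pi^{j-i}\omega_{n-i}z_jX^{q^i})$ which telescope to zero at $X=1$; over-convergence of every summand is then checked and evaluation at $1$ gives the result in one pass. You instead exploit the ring-homomorphism property $a\mapsto[a]_P$ to peel $[u]_P$ apart as $[z_0]_P +_{F_P}[z_1]_P\circ P +_{F_P}\cdots$, and reduce the claim to two clean ingredients: (A) $P(\mathcal E^Q_{P,m}(1))=\mathcal E^Q_{P,m-1}(1)$ and (B) $[z]_P(\mathcal E^Q_{P,m}(1))=\mathcal E^Q_{P,m}(z)$ for $z\in\mu_{q-1}\cup\{0\}$. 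Your (A) is in fact exactly Equation~(\ref{coherent}) established in the paper's proof of Proposition~\ref{LT_div_pt_thm} (with $[\pi]_P=P$), so you could simply cite it; and your (B) is the $z_1=\cdots=z_{n-1}=0$ special case singled out in the introduction, proved here non-circularly from $L_m(zX)=zL_m(X)$. The advantage of your decomposition is that it isolates the analytic difficulty in a single, already-proved identity, rather than re-running the convergence estimates on a large compound expression; the paper's single-pass computation is more self-contained but harder to read.

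Two small remarks. First, your ``error term'' $\exp_{F_P}(\pi\omega_m(X-X^q))$ in (A) is handled cleanly by the same trick the paper uses for Equation~(\ref{coherent}): rewrite it as $\exp_{F_P}(\pi\omega_m X)-_{F_P}\exp_{F_P}(\pi\omega_m X^q)$, note that $\exp_{F_P}(\pi\omega_m X)\in X\bo_K[\omega_m][[X]]$ (since $v_p(\pi\omega_m)>v_p(\omega_1)$ and $\exp_{F_P}(\omega_1X)\in X\bo_K[\omega_1][[X]]$), so each piece is over-convergent and they cancel at $X=1$; no independent appeal to Theorem~\ref{MainTheorem} is needed. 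Second, your closing observation about the contrast with $\mathcal E^Q_{P,n}(1)$ is well taken: there $L_n(X)$ has coefficients of unbounded negative valuation, so naive substitution at $X=1$ is meaningless, whereas the inner polynomial $\pi\omega_m(X-X^q)$ lands inside the convergence disc of $\exp_{F_P}$ (Proposition~\ref{ExpFPConv}), so there the naive substitution is actually legitimate — the split into a formal-group difference is merely the cleanest way to see it.
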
 

{Note that, when $z_1=\cdots=z_{n-1}=0$, this proposition implies the relation $$ \mathcal E^{Q}_{P,n}(z_0)= [z_0]_P(\mathcal E^{Q}_{P,n}(1))\enspace , \quad \text{for all } z_0 \in \mu_{q-1}. $$  This is a generalisation of (\cite{Lang}, Chap.~14, Thm.~3.2).}

\

Finally,  we shall prove our second main result, as a consequence of Theorem~\ref{MainTheorem} and Proposition~\ref{LT_div_pt_thm}~: we use the power series $\mathcal{E}_{P,2}^Q(X)$ to construct explicit normal basis generators in  abelian, weakly and wildly ramified extensions of any $p$-adic field.  In this manner, we generalise the constructions of Erez and Pickett, and give support towards the resolution of open questions in Galois module structure theory~:

\begin{ithm}\label{Thm4}
Let $K$ be a finite extension of~$\Q_p$, with valuation ring $\bo_K$, residue field $k$ and residue cardinality $q$.  Fix a uniformising parameter $\pi$ of $K$ and let $P(X)=X^q+\sum_{i=2}^{q-1}a_iX^i+\pi X$ be some Lubin-Tate polynomial of degree $q$ with respect to~$\pi$. Let  $M_{\pi,2}$ be a maximal abelian totally, weakly and wildly ramified extension of $K$. Let $\A_{M_{\pi,2}/K}$ denote the unique fractional ideal in $M_{\pi,2}$ whose square is equal to the inverse different of $M_{\pi,2}/K$ (see Section \ref{Galmod}).

Let $\pi'\in K$ be another uniformising element of $K$, with $\pi\equiv \pi' \mod \mathfrak p_K^3$~; and let $Q \in O_K[X]$ be a Lubin-Tate polynomial with respect to $\pi'$.
\vskip3mm
If $v_p(a_{q-1})=v_p(\pi)$, then
\begin{enumerate}
\item the trace element $Tr_{K_{\pi,2}/M_{\pi,2}}(\mathcal{E}^{Q}_{P,2}(1))$ is a uniformising parameter of $M_{\pi,2}$ and a generator of the valuation ring $\mathcal O_{M_{\pi,2}}$ of $M_{\pi,2}$ over its associated order in the extension $M_{\pi,2}/K$~; 
\item if $p$ is odd, then the elements $$\frac{Tr_{K_{\pi,2}/M_{\pi,2}}(\mathcal{E}^{Q}_{P,2}(1))}{\pi} \text{\ \ \ and\ \ \ }\frac{Tr_{K_{\pi,2}/M_{\pi,2}}(\mathcal{E}^{Q}_{P,2}(1))+q}{\pi}$$ 
are both generators of $\A_{M_{\pi,2}/K}$ over $\bo_K[\Gal(M_{\pi,2}/K)]$ .
\end{enumerate}
\end{ithm}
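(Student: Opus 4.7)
The plan follows Pickett's unramified construction \cite{Pickett}, adapted to our generalised Lubin--Tate setting. First I would identify $M_{\pi,2}$ as the fixed field in $K_{\pi,2}$ of the Teichm\"uller subgroup $\mu_{q-1}\subset\Gal(K_{\pi,2}/K)\cong(\bo_K/\pi^2)^\times$: by local class field theory, the norm group $\pi^{\Z}\cdot\widetilde{\mu_{q-1}}\cdot(1+\mathfrak p_K^2)$ cuts out the maximal abelian totally, weakly and wildly ramified extension of $K$, so $[M_{\pi,2}:K]=q$, $\Gal(M_{\pi,2}/K)\cong(1+\pi\bo_K)/(1+\pi^2\bo_K)$ is a $p$-group, and a direct check of the ramification filtration confirms $G_0=G_1=G$ and $G_2=1$. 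Set $\alpha:=\mathcal E^Q_{P,2}(1)$ and $\tau:=Tr_{K_{\pi,2}/M_{\pi,2}}(\alpha)$. By Proposition~\ref{LT_div_pt_thm}(1) (since $\pi\equiv\pi'\mod\mathfrak p_K^3$), $\alpha$ is a primitive $2$nd Lubin--Tate division point with respect to $P$ and hence a uniformiser of $K_{\pi,2}$; Proposition~\ref{thm3} applied with $z_1=0$ identifies the $M_{\pi,2}$-conjugates of $\alpha$ as $\{\mathcal E^Q_{P,2}(z):z\in\mu_{q-1}\}$, so
\[
\tau=\sum_{z\in\mu_{q-1}}\mathcal E^Q_{P,2}(z)=\omega_2^{\,2}\sum_{z\in\mu_{q-1}}z\,h(z),
\]
where $h\in\bo_K[\omega_2][[X]]$ is defined by $\mathcal E^Q_{P,2}(X)=\omega_2 X+\omega_2^{\,2}X h(X)$ via Theorem~\ref{MainTheorem}(2), and the linear term vanishes because $\sum_{z\in\mu_{q-1}}z=0$ for $q\ge3$.

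The central technical step is to prove $v_{K_{\pi,2}}(\tau)=q-1$. Writing $\mathcal E^Q_{P,2}(X)=\exp_{F_P}(Y(X))$ with $Y(X)=\omega_2(X-X^q)+(\omega_1/\pi)(X^q-X^{q^2})$, the coefficient of $X^{q-1}$ in $\mathcal E^Q_{P,2}$ receives a contribution only from $Y(X)^{q-1}=\omega_2^{q-1}X^{q-1}+\ldots$, since the other monomials in $Y$ have $X$-degrees $q$ and $q^2$. Hence this coefficient equals $b_{q-1}\omega_2^{q-1}$, where $b_{q-1}$ is the $Y^{q-1}$-coefficient of $\exp_{F_P}$. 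The defining relation $\log_{F_P}\circ P=\pi\log_{F_P}$ yields the recursion $l_k=-\bigl(\sum_{n<k}l_n\,[X^k]P^n\bigr)/(\pi^k-\pi)$ for the coefficients of $\log_{F_P}$, and the hypothesis $v_p(a_{q-1})=v_p(\pi)$ forces $v_K(l_{q-1})=0$; inverting $\log_{F_P}$ and tracking the contributions from lower-degree $l_j$'s (all of which are integral) yields $v_K(b_{q-1})=0$. Meanwhile, the higher-order coefficients $c_{m(q-1)}$ for $m\ge2$ of $\mathcal E^Q_{P,2}$ involve cross terms in $\omega_1/\pi$ whose individual summands have very negative valuation, but the integrality in Theorem~\ref{MainTheorem}(1) forces cancellations that leave $v_{K_{\pi,2}}(c_{m(q-1)})$ strictly larger than $q-1$. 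The main obstacle is precisely controlling these cancellations, since the expansion of $\mathcal E^Q_{P,2}$ in powers of $\omega_2$ is not term-by-term integral and one has to extract the leading behaviour from the formal-group exponential structure.

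Once $\tau$ is established as a uniformiser of $M_{\pi,2}$, statement~(1) follows from the standard result (cf.\ \cite{erez}) that any uniformiser of a weakly ramified abelian extension generates its valuation ring over the associated order. For statement~(2), the different exponent of $M_{\pi,2}/K$ equals $2(q-1)$, so $\A_{M_{\pi,2}/K}=\mathfrak p_{M_{\pi,2}}^{-(q-1)}$; since $v_{M_{\pi,2}}(\pi)=q$ and $v_{M_{\pi,2}}(q)\ge q$, both $\tau/\pi$ and $(\tau+q)/\pi$ lie in $\A_{M_{\pi,2}/K}$ with $M_{\pi,2}$-valuation exactly $-(q-1)$. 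For $p$ odd, one then invokes the congruence characterisation of $\bo_K[\Gal(M_{\pi,2}/K)]$-generators of $\A$ (in the spirit of Erez's work) and verifies it for both elements using the trace formula derived above.
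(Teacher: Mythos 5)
Your overall framing—reduce both parts to showing $\tau:=Tr_{K_{\pi,2}/M_{\pi,2}}(\mathcal E^Q_{P,2}(1))$ is a uniformiser of $M_{\pi,2}$, then cite results on weakly ramified extensions—matches the paper. But your method for establishing that $\tau$ is a uniformiser is genuinely different from the paper's, and it has a real gap at exactly the point you flag.

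You attempt to show $v_{K_{\pi,2}}(\tau)=q-1$ by expanding $\tau=(q-1)\sum_{m\ge 1}c_{m(q-1)}$ where $c_i$ are the coefficients of $\mathcal E^Q_{P,2}(X)$, computing that $c_{q-1}=b_{q-1}\omega_2^{q-1}$ with $b_{q-1}$ a unit, and then asserting that the higher terms $c_{m(q-1)}$, $m\ge 2$, all have $K_{\pi,2}$-valuation strictly greater than $q-1$. That last claim is the crux, and you give no argument for it: Theorem~\ref{MainTheorem}(2) only yields $v_{K_{\pi,2}}(c_i)\ge 2$ for $i\ge 2$, not $\ge q$, and the "cancellations forced by integrality" you invoke are precisely what would need to be proved. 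Your own acknowledgement ("the main obstacle is precisely controlling these cancellations") concedes the proof is incomplete. The claim about $b_{q-1}$ being a unit under the hypothesis $v_p(a_{q-1})=v_p(\pi)$ is also only sketched via a recursion for $\log_{F_P}$ coefficients, with no verification. Additionally, your $\sum_{z\in\mu_{q-1}}z=0$ step requires $q\ge 3$, leaving the case $q=2$ unaddressed.

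The paper sidesteps the entire coefficient analysis. It observes that $f(X)=P(P(X))/P(X)=P(X)^{q-1}+\sum_{i=2}^{q-1}a_iP(X)^{i-1}+\pi$ is Eisenstein of degree $q(q-1)$ and is the minimal polynomial of $\mathcal E^Q_{P,2}(1)$, so $Tr_{K_{\pi,2}/K}(\mathcal E^Q_{P,2}(1))=-(q-1)a_{q-1}$ has $K$-valuation exactly $1$ by hypothesis. Combined with the tameness of $K_{\pi,2}/M_{\pi,2}$ (giving $\tau\in\bp_{M_{\pi,2}}$), Hilbert's formula (giving the different exponent $2(q-1)$ of $M_{\pi,2}/K$), and Serre's trace characterisation $Tr_{M_{\pi,2}/K}(\bp_{M_{\pi,2}}^2)\subseteq\bp_K^2$, transitivity of the trace forces $v_{M_{\pi,2}}(\tau)=1$. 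This is a complete two-step argument (compute $Tr_{K_{\pi,2}/K}$ exactly, then squeeze the intermediate trace) that never touches the formal group exponential coefficients, and it is where the hypothesis on $a_{q-1}$ actually does its work. Finally, for the citations in parts (1) and (2) the paper uses Byott's theorem and Vinatier's Corollary~2.5; your appeals to Erez and to a vague "congruence characterisation" would need to be replaced or made precise.
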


\

Furthermore, Part~2 of this theorem will enable us to give explicit integral normal basis generators for the square root of the inverse different in every abelian totally, weakly and wildly ramified extension of any $p$-adic field~(see Corollary~\ref{CorThm2}).

\

We also remark that in this second part, the first element seems the more natural, however the second is in fact the generalisation of Erez's basis generator for the square root of the inverse different.  
If these basis generators can be used in local calculations in a similar way to those of Erez and Pickett, it should be possible to solve the case of whether $\A_{E/F}$ is free over $\Z[G]$ whenever the decomposition groups at wild places are abelian and in particular whenever $E/F$ itself is abelian. We hope this will be possible in the future, however, so far these calculations have eluded us.

\

\noindent
{\bf Organisation of the paper.}  This paper is organised into three sections. In Section~\ref{background}, we give the background to the theory we need to prove our  results. Precisely, we first introduce Lubin-Tate formal groups in their original setting and also in terms of Hazewinkel's so called functional equation approach. We also introduce Fr\"ohlich's notion of formal group exponentials and logarithms. Following work of Ditters, Drinfeld, and Fontaine and Fargues, we finally introduce the theory of ramified Witt vectors needed to generalise Pulita's methods.
In Section~\ref{sec2}, we study the properties of the power series $\mathcal{E}^{Q}_{P,n}(X)$ and prove Theorem~\ref{MainTheorem}. We also improve on Fr\"ohlich's original bound of the radius of convergence of any formal group exponential coming from a Lubin-Tate formal group over a non-trivial extension of~$\Q_p$. 
In Section~\ref{applications}, we explore the applications described above and prove Propositions~\ref{LT_div_pt_thm} and \ref{thm3}, as well as Theorem~\ref{Thm4}.



\section{Background} \label{background}
\subsection{Formal groups}\label{fgroups}

Let $A$ be a commutative ring with identity. 
\begin{definition}
We define a $1$-dimensional commutative formal group over $A$ to be a formal power series $F(X,Y)\in A[[X,Y]]$ such that
\begin{enumerate}
\item $F(X,Y)=F(Y,X)$
\item $F(X,F(Y,Z))=F(F(X,Y),Z)$
\item $F(X,0)=X=F(0,X)$
\end{enumerate}
Throughout, all the formal groups we consider will be $1$-dimensional commutative formal groups. For brevity we will now refer to these simply as formal groups.
\end{definition}

Properties 1-3 can be used to prove that there exists a unique $j(X)\in XA[[X]]$ such that $F(X,j(X))=0$ (see Appendix A.4.7 of \cite{Hazewinkel_book}). 
This means that the formal group $F(X,Y)$ endows $XA[[X]]$, among other sets, with an abelian group structure.

\begin{notation}
When considering a set endowed with such a group structure we write the group operation as $+_F$. We will also use the notation $-_F$ to determine this group operation composed with the group inverse, for example $$F(X,Y)=X+_FY\textit{\ \ \ and\ \ \ }F(X,j(Y))=X-_FY\enspace.$$
\end{notation}

\begin{definition}
Let $F(X,Y)$ and $G(X,Y)$ be two formal groups over $A$. A homomorphism over the ring $A$, $f:F(X,Y)\rightarrow G(X,Y)$, is a formal power series $f(X)\in X A[[X]]$ such that 
$$f(F(X,Y))=G(f(X),f(Y))\enspace.$$
Moreover, we say that the homomorphism $f$ is an isomorphism if there exists a homomorphism $f^{-1}:G(X,Y)\rightarrow F(X,Y)$ such that $f(f^{-1}(X))=f^{-1}(f(X))=X$.
\end{definition}

\subsubsection*{Lubin-Tate formal groups}

We now describe a special type of formal group, due originally to Lubin and Tate. Such formal groups are used in local class field theory to construct maximal totally ramified abelian extensions of a $p$-adic field $K$. For full details see, for example, \cite{Lubin_Tate} or \cite{serre-lubintate}.

\

Let $K$ be a finite extension of $\Q_p$, fix a uniformising parameter $\pi$ of $\bo_K$ and let $q=|\bo_K/\bp_K|$ be the cardinality
of the residue field of $K$. 

\begin{definition}
We define $\mathcal{F}_{\pi}$ as the set of formal power series $P(X)$ over $\bo_K$ such that
$$P(X)\equiv\pi X\mod X^2\bo_K[[X]]\text{\ \ \ and\ \ \ } P(X)\equiv X^q\mod\pi\bo_K[[X]]\enspace.$$

 Such power series are called Lubin-Tate series with respect to~$\pi$. 
\end{definition}

For each $P\in\mathcal{F}_\pi$ there exists a unique formal group $F_P(X,Y)\in\bo_K[[X,Y]]$ 
which admits $P$ as an endomorphism. Such formal groups are known as Lubin-Tate formal groups.
For each $P\in\mathcal{F}_{\pi}$ and each $a\in \bo_K$,
there exists a unique formal power series, $[a]_P(X)\in
X\bo_K[[X]]$, such that $P([a]_P(X))=[a]_P(P(X))$ and 
$$[a]_P(X)\equiv aX\mod 
X^2\bo_K[[X]]\enspace.$$

Further, the map $a\mapsto[a]_P(X)$ is an injective ring homomorphism $\bo_K\rightarrow \End_{\bo_K}(F_P)$ and for any $P,Q\in\mathcal{F}_\pi$, the formal groups $F_P(X,Y)$ and $F_Q(X,Y)$ are isomorphic over $\bo_K$.

\

Let $\bp_{\mathbb{C}_p}=\{x\in\mathbb{C}_p:v_p(x)>0\}$. For $P(X)\in\mathcal{F}_\pi$ and $a\in\bo_K$, the formal power series $F_P(X,Y)$ and $[a]_P(X)$ converge to limits in $\bp_{\mathbb C_p}$ when evaluated at elements of $\bp_{\mathbb C_p}$. We can thus use the abelian group operation $+_F$ and the injective ring homomorphism $a\mapsto[a]_P(X)$ to endow $\bp_{\mathbb C_p} $ with an $\bo_K$-module structure. 
For every $n\geq 1$, we then let 
$$T_{P,n}=\{x\in\bp_{\mathbb{C}_p}:[\pi^n]_P(x)=0\} $$ be the set of $\pi^n$-torsion points of this module and refer to it as the set of the \textit{$n$th Lubin-Tate division points} with respect to $P$. If $x\in T_{P,m}$ if and only if $m\le n$, then we say $x$ is a primitive $n$th division point.

\

We let
$$K_{\pi,n}=K(T_{P,n})\text{\ \ \ and\ \ \ } K_{\pi}= \cup_nK_{\pi,n}\enspace.$$
 The set $T_{P,n}$ depends on
the choice of the polynomial $P(X)$ but the field $K_{\pi,n}$
depends only on the uniformising parameter $\pi$. The extensions
$K_{\pi,n}/K$ are totally ramified, abelian and of degree $q^{n-1}(q-1)$. We have $K^{ab}=K_{\pi}K^{un}$ and $K^{un}\cap K_{\pi}=K$, where $K^{ab}$ and $K^{un}$ are the maximal abelian and unramified extensions of $K$ respectively.

\subsubsection*{Hazewinkel's approach to Lubin-Tate formal groups}

We now describe a different approach to the construction of Lubin-Tate formal groups due to Hazewinkel. 
This approach enables us to use Hazewinkel's \textit{functional equation lemma} to prove the integrality of various power series relating to formal groups, which will be essential in the sequel. For full details see \cite{Hazewinkel_book} (Chap.~I, \S2).
\

Recall that $p$ is a rational prime, $K$ is a finite extension of $\Q_p$, $\pi$ is a fixed uniformising parameter of $\bo_K$ and $q$ is the cardinality of the residue field of $K$. 
For any series $g(X)\in X\bo_K[[X]]$ we construct a new power series $f_g(X)\in XK[[X]]$ by the recursion formula (or functional equation)~: 

$$f_g(X)=g(X)+\frac{f_g(X^{q})}{\pi}\enspace.$$ 
We denote by $f_g^{-1}(X)\in XK[[X]]$ the unique power series such that 
$$f_g(f_g^{-1}(X))=X=f_g^{-1}(f_g(X))\enspace.$$ 
Note that if $f_g(X)\in X\bo_K[[X]]$ and the coefficient of $X$ in $f_g(X)$ is invertible in $\bo_K$, then $f_g^{-1}(X)\in X\bo_K[[X]]$, see \cite[A.4.6]{Hazewinkel_book}.

\

We now state two parts of the functional equation lemma for this special setting. For the full statement in all generality and its proof, see \cite[\S2.2-2.4]{Hazewinkel_book}.

\begin{theorem}[{Hazewinkel, \cite[2.2]{Hazewinkel_book}}]\label{fel}
Let $g(X),h(X)\in X\bo_K[[X]]$ and suppose that the coefficient of $X$ in $g(X)$ is invertible in $\bo_K$. Then,
\begin{enumerate}
\item $f_g^{-1}(f_g(X)+f_g(Y))\in \bo_K[[X,Y]]$.
\item $f_g^{-1}(f_{h}(X))\in X\bo_K[[X]]$.
\end{enumerate}

\end{theorem}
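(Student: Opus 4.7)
The plan is to deduce both parts from a single master integrality lemma: if $\Phi(X_1,\ldots,X_r)\in K[[X_1,\ldots,X_r]]$ has zero constant term and satisfies $\Phi - \pi^{-1}\Phi|_{X_i\mapsto X_i^q}\in\bo_K[[X_1,\ldots,X_r]]$, then $f_g^{-1}(\Phi)\in\bo_K[[X_1,\ldots,X_r]]$. Part~1 follows by taking $\Phi(X,Y)=f_g(X)+f_g(Y)$, whose defect is $g(X)+g(Y)\in\bo_K[[X,Y]]$; Part~2 follows by taking $\Phi(X)=f_h(X)$, whose defect is $h(X)\in\bo_K[[X]]$. The hypothesis that the coefficient of $X$ in $g$ is invertible passes to $f_g$ (inspection of the recursion $f_g(X)=g(X)+\pi^{-1}f_g(X^q)$ shows $f_g$ has the same linear coefficient as $g$), so $f_g^{-1}$ exists and the compositions make sense; the additional ``$X\bo_K[[X]]$'' conclusion in Part~2 is then automatic since both $f_h$ and $f_g^{-1}$ have zero constant term.

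For the master lemma, set $\phi=f_g^{-1}\circ\Phi$ and $\gamma=\Phi-\pi^{-1}\Phi|_{X_i\mapsto X_i^q}\in\bo_K[[X_1,\ldots,X_r]]$. Substituting $Y=\phi(X)$ into the functional equation $f_g(Y)=g(Y)+\pi^{-1}f_g(Y^q)$ and subtracting from $\Phi(X)=\gamma(X)+\pi^{-1}f_g(\phi(X^q))$ yields the key identity
$$g(\phi(X)) - \gamma(X) = \pi^{-1}\bigl(f_g(\phi(X^q)) - f_g(\phi(X)^q)\bigr).$$
I would then prove by induction on total degree that $\phi\in\bo_K[[X_1,\ldots,X_r]]$. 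The base case in degree one is immediate: the defect condition forces the linear part of $\Phi$ to be integral (since $\Phi(X^q)$ has no linear terms), and $g'(0)\in\bo_K^{\times}$.

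The inductive step relies on two standard congruences. First, for any $\psi\in\bo_K[[X]]$, the Frobenius congruence $\psi(X)^q\equiv\psi(X^q)\pmod{\pi\bo_K[[X]]}$ holds, combining $a^q\equiv a\pmod\pi$ for $a\in\bo_K$ with the multinomial divisibility $\binom{q}{k_1,\ldots,k_s}\equiv 0\pmod p$ unless some $k_i=q$. Second, its refinement $u\equiv v\pmod{\pi^m}\Rightarrow u^q\equiv v^q\pmod{\pi^{m+1}}$, valid because $v_\pi(q)\geq1$. Combined with the explicit expansion $f_g(Y)=\sum_{i\geq0}\pi^{-i}g(Y^{q^i})$, these show that when $\phi$ is integral modulo $X^N$, the right-hand side of the key identity is integral modulo $X^{N+1}$; invertibility of $g'(0)$ then forces the degree-$N$ coefficient of $\phi$ to lie in $\bo_K$.

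The main obstacle is the bookkeeping for the inductive step: one must verify that at each degree, the Frobenius defect $f_g(\phi(X^q))-f_g(\phi(X)^q)$ lies in $\pi\bo_K[[X]]$ up to the required order, despite $f_g$ itself carrying denominators $\pi^{-i}$ of unbounded $\pi$-valuation. The refined congruence $u\equiv v\pmod{\pi^m}\Rightarrow u^{q^i}\equiv v^{q^i}\pmod{\pi^{m+i}}$ is precisely what cancels these growing denominators term by term, making each summand $\pi^{-i}\bigl(g(\phi(X^q)^{q^i})-g(\phi(X)^{q^{i+1}})\bigr)$ sit in $\pi\bo_K[[X]]$, so that division by $\pi$ yields an integral series and the induction closes.
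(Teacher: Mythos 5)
Your proof is correct. Note that the paper does not actually prove this statement itself: it simply cites Hazewinkel's book (\S 2.2--2.4) for the general functional equation lemma, of which the statement is a special case. So there is no paper proof to compare against, but the argument you give is essentially Hazewinkel's own: formulate a single integrality criterion (here, that $f_g^{-1}(\Phi)$ is integral whenever $\Phi$ has zero constant term and $\Phi-\pi^{-1}\Phi(X_i^q)$ is integral), deduce both parts by exhibiting $\Phi=f_g(X)+f_g(Y)$ and $\Phi=f_h$ as satisfying it, and prove the criterion by induction on total degree using the Frobenius congruence $\psi(X)^q\equiv\psi(X^q)\pmod{\pi}$ together with the sharpening $u\equiv v\pmod{\pi^m}\Rightarrow u^q\equiv v^q\pmod{\pi^{m+1}}$ (valid since $v_\pi(q)\geq 1$) to cancel the denominators $\pi^{-i}$ in $f_g(Y)=\sum_i\pi^{-i}g(Y^{q^i})$. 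The one place you should be a touch more careful in a full write-up is in the inductive step: the integrality of $\pi^{-1}\bigl(f_g(\phi(X^q))-f_g(\phi(X)^q)\bigr)$ should be established for the integral truncation $\phi^{<N}$ of $\phi$ and then compared with the true series modulo degree $>N$ (the replacement costs nothing since $\phi(X^q)$ and $\phi(X)^q$ change only in degrees $\geq qN$ and $\geq N+q-1$ respectively), rather than for $\phi$ itself, which is not yet known to be integral. With that bookkeeping made explicit the argument closes, and it matches the standard proof.
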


It is routine to check that $f_g^{-1}(f_g(X)+f_g(Y))$ is a formal group and from part 1. of the previous theorem we know that it has coefficients in $\bo_K$. In fact, it is a Lubin-Tate formal group and every Lubin-Tate formal group can be constructed in this manner. This link is described in the following proposition.

\begin{proposition}[{Hazewinkel, \cite[8.3.6]{Hazewinkel_book}}]
\label{hazewinkel_prop}
Let $g(X)\in X\bo_K[[X]]$ with 
$$g(X)\equiv X\mod X^2\bo_K[[X]].$$
Then,
\begin{enumerate}
\item $f_g^{-1}(\pi f_g(X))\in\mathcal{F}_{\pi}$.
\item If we let $P(X)=f_g^{-1}(\pi f_g(X))$, then $F_P(X,Y)=f^{-1}_g(f_g(X)+f_g(Y))$.
\item These relations give a one to one correspondence between the Lubin-Tate formal groups obtained from power series $P(X)\in\mathcal{F}_{\pi}$ and power series $g(X)\in X\bo_K[[X]]$ with $g(X)\equiv X\mod X^2\bo_K[[X]]$.
\end{enumerate}
\end{proposition}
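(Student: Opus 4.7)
The plan is to deduce all three assertions from Theorem~\ref{fel} together with the standard uniqueness of Lubin-Tate formal groups. The one observation that drives everything is that, setting $h(X):=\pi g(X)\in X\bo_K[[X]]$, one checks from the defining recursion
$$\pi f_g(X)=\pi g(X)+f_g(X^q)=h(X)+\frac{\pi f_g(X^q)}{\pi}$$
that $\pi f_g$ also satisfies the functional equation defining $f_h$, so by uniqueness $f_h=\pi f_g$. This reduces everything to known instances of the functional equation lemma.

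For part~(1), write $P(X)=f_g^{-1}(\pi f_g(X))=f_g^{-1}(f_h(X))$. Integrality $P\in X\bo_K[[X]]$ is then exactly Theorem~\ref{fel}(2). The leading-term congruence $P(X)\equiv\pi X \bmod X^2\bo_K[[X]]$ is immediate from $f_g(X)\equiv X\bmod X^2$. The delicate point is the Frobenius congruence $P(X)\equiv X^q\bmod \pi\bo_K[[X]]$. I would derive it from the identity
$$f_g(P(X))-f_g(X^q)=\pi f_g(X)-f_g(X^q)=\pi g(X)\in\pi\bo_K[[X]],$$
combined with the congruence strengthening of Hazewinkel's functional equation lemma (the part not quoted in the excerpt): if $A,B\in\bo_K[[X]]$ and $f_g(A)-f_g(B)\in\pi^s\bo_K[[X]]$, then $A-B\in\pi^s\bo_K[[X]]$. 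Applied with $s=1$, $A=P(X)$, $B=X^q$, this gives exactly what we want.

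For part~(2), set $G(X,Y):=f_g^{-1}(f_g(X)+f_g(Y))$. Theorem~\ref{fel}(1) gives $G\in\bo_K[[X,Y]]$, and the three formal group axioms follow formally from the fact that $f_g$ is an isomorphism in $XK[[X]]$ from $G$ to the additive formal group over $K$. To see that $P$ is an endomorphism of $G$, compute
$$G(P(X),P(Y))=f_g^{-1}(\pi f_g(X)+\pi f_g(Y))=f_g^{-1}(\pi f_g(G(X,Y)))=P(G(X,Y)),$$
using $f_g(P(Z))=\pi f_g(Z)$ twice and the defining identity for $G$. Since a Lubin-Tate formal group is uniquely determined by admitting an element of $\mathcal{F}_\pi$ as an endomorphism, $G=F_P$.

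For part~(3), the candidate inverse map is $P\mapsto g$ where $g(X):=\log_{F_P}(X)-\log_{F_P}(X^q)/\pi$. Then $\log_{F_P}$ satisfies the recursion $\log_{F_P}(X)=g(X)+\log_{F_P}(X^q)/\pi$, so $\log_{F_P}=f_g$ by uniqueness of series solving the recursion, and the constructions of parts~(1)--(2) recover $P$ and $F_P$ from this $g$. The one thing to verify is that the constructed $g$ lies in $X\bo_K[[X]]$ with leading term $X$. The leading-term statement is immediate since $\log_{F_P}(X)\equiv X\bmod X^2$. For integrality, the main obstacle (and the mirror of the Frobenius-congruence issue in part~(1)) is handled by rewriting
$$\pi g(X)=\pi\log_{F_P}(X)-\log_{F_P}(X^q)=\log_{F_P}(P(X))-\log_{F_P}(X^q)$$
using $\log_{F_P}\circ P=\pi\log_{F_P}$, and then applying the congruence $P(X)\equiv X^q\bmod\pi$ from part~(1) together with the same congruence version of the functional equation lemma (now in the reverse direction) to conclude that the right-hand side lies in $\pi\bo_K[[X]]$.
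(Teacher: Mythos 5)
The paper states Proposition~\ref{hazewinkel_prop} as a cited result (Hazewinkel's book, 8.3.6) and gives no proof of its own, so there is no proof in the paper to compare yours against; I can only assess your argument on its own merits.

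Your reduction $f_h=\pi f_g$ for $h=\pi g$, your proof of Part~1 (including the honest appeal to the congruence part of Hazewinkel's functional equation lemma, which is indeed part of \cite[2.2]{Hazewinkel_book} even though the paper only quotes the analogues of parts (i) and (ii)), and your proof of Part~2 are all correct and cleanly organised.

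Part~3 has a genuine circularity. You set $g(X):=\log_{F_P}(X)-\log_{F_P}(X^q)/\pi$, so that $\log_{F_P}=f_g$ formally, and then to prove $g\in\bo_K[[X]]$ you rewrite $\pi g=\log_{F_P}(P(X))-\log_{F_P}(X^q)$ and apply the congruence part of the functional equation lemma to $f_g=\log_{F_P}$: from $P\equiv X^q\bmod\pi$ you want $\log_{F_P}(P)\equiv\log_{F_P}(X^q)\bmod\pi\bo_K[[X]]$. But the functional equation lemma in any of its parts has as a standing hypothesis that the series $g$ defining $f_g$ lies in $\bo_K[[X]]$ with a unit leading coefficient --- precisely the statement you are trying to prove. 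As written, the argument assumes its conclusion. A non-circular route is available: either compute $\log_{F_P}=\lim_n \pi^{-n}P^{(n)}$ and show directly, by induction using $A\equiv B\bmod\pi^s\Rightarrow P(A)\equiv P(B)\bmod\pi^{s+1}$, that $P^{(n)}(X)-P^{(n-1)}(X^q)\in\pi^n\bo_K[[X]]$, so that $g=\lim_n\pi^{-n}\bigl(P^{(n)}(X)-P^{(n-1)}(X^q)\bigr)$ is integral; or invoke a different part of Hazewinkel's lemma, namely the one asserting that $f_{g_0}\circ\psi$ is again of functional-equation type for any $\psi\in X\bo_K[[X]]$, applied with $g_0(X)=X$ and $\psi$ an $\bo_K$-isomorphism from $F_P$ to the standard Lubin--Tate group $F_{P_0}$ (such an isomorphism exists by Lubin--Tate theory). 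Either way, the key input is something beyond the congruence statement you cite, and must be applied with a series whose integrality is already known.
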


We also observe that for any $a\in\bo_K$, substituting $h(X)=ag(X)$ into part $2$ of Theorem~\ref{fel} then gives us $f_g^{-1}(af_g(X))\in\bo_K[[X]]$, and so if $P(X)=f_g^{-1}(\pi f_g(X))$, then
$$
[a]_P(X)=f_g^{-1}(af_g(X))\enspace.
$$

\

\subsubsection*{Formal group exponentials}

Hazewinkel's power series $f_g(X)$ and $f_g^{-1}(X)$ can be thought of as special formal group isomorphisms which were first studied by Fr\"ohlich in (\cite{Frohlich}, Chap.~IV, \S1).

\

Let $E$ be any field of characteristic $0$, let $F(X,Y)$ be a formal group over $E$ and let $\mathbb{G}_a(X,Y)=X+Y$ be the additive formal group. There exists a unique isomorphism $\log_F:F\rightarrow\mathbb{G}_a$ over $E$ such that $\log_F(X)\equiv X\mod X^2E[[X]]$, known as the formal group logarithm (\textit{loc. cit.}, Prop.~1). The inverse of $\log_F(X)$ is known as the formal group exponential and is denoted by $\exp_F(X)$; we note that necessarily we also have, $\exp_F(X)\equiv X\mod X^2E[[X]]$. 

Now let $F(X,Y)=F_P(X,Y)=f_g^{-1}(f_g(X)+f_g(Y))$ be a Lubin-Tate formal group for $K$ as in Prop. \ref{hazewinkel_prop}. 
We then have
\begin{equation}f_g(X)=\log_{F}(X) \text{\ \ \ and\ \ \ }f_g^{-1}(X)=\exp_{F}(X)\label{logexp}\end{equation}
 and these power series are uniquely determined by the following equivalent identities:
\begin{equation}\label{logexp_identitites}
\begin{array}{ccc}
F(X,Y)&=&\exp_{F}(\log_{F}(X)+\log_{F}(Y))\\
\log_{F}(F(X,Y))&=&\log_{F}(X)+\log_{F}(Y)\\
\exp_{F}(X+Y)&=&F(\exp_{F}(X),\exp_{F}(Y))
\end{array}
\end{equation}
We also observe that,
\begin{equation}\label{a_action}
[a]_P(X)=\exp_{F}(a\log_{F}(X))\enspace.
\end{equation}

\

\begin{remark}
The reason these power series are referred to as formal group exponentials and formal group logarithms is that if $K=\Q_p$, then $P(X)=(X+1)^p-1\in\mathcal{F}_p$ and $F_P(X,Y)=X+Y+XY=\mathbb{G}_m$, the multiplicative formal group. We then have $\exp_{F_P}(X)=\exp(X)-1$ and $\log_{F_P}(X)=\log(X-1)$ where $\log$ and $\exp$ are the standard logarithmic and exponential power series. 
\end{remark}

\subsection{Witt vectors}\label{WittSection}
This section is concerned with the notion of ramified Witt vectors, generalising the classical theory of Witt vectors introduced by Witt in his original paper~\cite{Witt36}. This notion was first developed independently by Ditters~\cite{Ditters75} and Drinfeld~\cite{Drinfeld76}, and then by Hazewinkel~\cite{HazewinkelWittVectors} from a formal group approach in a more general setting. The reader is also referred to Section~5.1 of the current preprint~\cite{FontaineFargues} of Fontaine and Fargues.

\subsubsection*{Standard Witt vectors} 
We first briefly recall the construction of ``standard" Witt vectors. Let $p$ be a prime number, and let $X_0,X_1,...$ be a sequence of indeterminates. The original Witt polynomials are defined by~:
$$\forall n \geq 0, \quad \W_n(X_0,...,X_n)=\sum_{i=0}^np^i X_i^{p^{n-i}} \in \Z[X_0,...X_n]\enspace.$$

\

The standard Witt vectors can be constructed as a functor $W:A \mapsto W(A)$ from the category of commutative rings to itself. Precisely, if $A$ is a commutative ring, we first define $W(A)$ as the set of infinite sequences $A^{\Z_{\geq 0}}$. The  elements of $W(A)$ are called Witt vectors, and to each Witt vector $x=(a_n)_n\in W(A)$, one can attach a sequence $\langle a^{(n)} \rangle_n \in A^{\Z_{\geq 0}}$ whose coordinates are called the \textit{ghost components} of $x$ and are defined by the Witt polynomials~: $a^{(n)}=\W_n(a_0,...,a_n)$,  for all $n \geq 0$.

\

The set $W(A)$ is then uniquely endowed with two laws of composition that satisfy the axioms of a commutative ring, in such a way that the \textit{ghost map} $\Gamma_A : (a_n)_n \in W(A) \mapsto \langle a^{(n)}\rangle_n \in A^{\Z_{\geq 0}}$ becomes a ring homomorphism.

\

Under this functor, any ring homomorphism $\varphi:A \rightarrow B$ is sent to the ring homomorphism $W(\varphi):W(A)\rightarrow W(B)$ which is defined componentwise, \textit{i.e.}, $W(\varphi)((a_n)_n)=(\varphi(a_n))_n$. See (\cite{Bourbaki83}, Chap.~IX) for more details.

\

\subsubsection*{Ramified Witt vectors} Let $p$ be a prime number. Let $K$ be a finite extension of $\Q_p$, with valuation ring $\mathcal{O}_K$ and residue field $k$. We fix a uniformising parameter~$\pi$ of $\mathcal{O}_K$, and write $k=\F_q$ with $q=p^{f}$. Ramified Witt vectors over $\mathcal{O}_K$ are constructed as a functor $W_{\mathcal{O}_K,\pi}:A \mapsto W_{\mathcal{O}_K,\pi}(A)$ from the category of $\mathcal{O}_K$-algebras to itself, starting with generalised Witt-like polynomials and then proceeding along the lines of the construction of the usual Witt vectors. For convenience, as well as to collect some useful properties of the ramified Witt vectors, we shall briefly describe this functor. 

\

In the case of ramified Witt vectors, the relevant polynomials are~:
$$\forall n \geq 0, \quad \W_{n,\mathcal{O}_K,\pi}(X_0,...,X_n)=\sum_{i=0}^n\pi^iX_i^{q^{n-i}} \in \mathcal{O}_K[X_0,...,X_n]\enspace.$$

Let $A$ be an $\mathcal{O}_K$-algebra. We first define $W_{\mathcal{O}_K,\pi}(A)$ as the set $A^{\Z_{\geq 0}}$ as the set of infinite sequences over $A$. We shall use the notation $(a_n)_n$ for elements in $W_{\mathcal{O}_K,\pi}(A)$, and $\langle a_n\rangle_n$ for elements in $A^{\Z_{\geq 0}}$.

\

If $(a_n)_n \in W_{\mathcal{O}_K,\pi}(A)$, we define its ghost components as $a^{(n)}=\W_{n,\mathcal{O}_K,\pi}(a_0,...,a_n)$ for all $n \geq 0$. The sequence $\langle a^{(0)},a^{(1)},...\rangle$ is called the ghost vector of $(a_n)_n$. This defines a map, that we shall denote by $\Gamma_{\pi,\mathcal{O}_K,A}$ or simply by $\Gamma_A$ when the setup is explicit, called the ghost map of $A$~:
$$\begin{array}{cccc}
\Gamma_A \quad : & W_{\mathcal{O}_K,\pi}(A)& \longrightarrow & A^{\Z_{\geq 0}} \\
&  (a_n)_n  & \mapsto & \langle a^{(n)}\rangle_n\enspace.
\end{array}$$

\vskip3mm

The following lemma is essential for what follows.

\begin{lemma}\label{WittKey} Let $A$ be an $\mathcal{O}_K$-algebra with no $\pi$-torsion. Then, the ghost map $\Gamma_{A}$ is injective. 
If, moreover, there exists an $\mathcal{O}_K$-algebra endomorphism $\sigma : A \rightarrow A$ such that $\sigma(a)\equiv a^q \mod \pi A$ for all $a \in A$, then the image of $\Gamma_A$ is the sub-algebra of the product algebra $A^{\Z_{\geq 0}}$ given by 
$$\{\langle u_n\rangle_{n\geq 0} \in A^{\Z_{\geq 0}} \: : \:  \sigma(u_n) \equiv u_{n+1} \mod \pi^{n+1}A\}\enspace.$$
\end{lemma}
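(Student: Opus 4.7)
The plan is to prove injectivity first by a direct induction on the coordinates, and then to establish the characterization of the image by constructing an inverse image for every admissible ghost vector, again by induction. For injectivity, note that the ramified Witt polynomial factors as $\W_{n,\mathcal{O}_K,\pi}(X_0,\ldots,X_n)=\pi^n X_n+\W_{n-1,\mathcal{O}_K,\pi}(X_0^q,\ldots,X_{n-1}^q)$; thus if $(a_n)_n$ and $(b_n)_n$ share the same ghost vector, then induction on $n$ combined with the hypothesis that $A$ has no $\pi$-torsion forces $a_n=b_n$ coordinate by coordinate.

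The technical heart of the image statement is a sublemma: if $x\equiv y\pmod{\pi^m A}$ with $m\ge 1$, then $x^q\equiv y^q\pmod{\pi^{m+1}A}$. I would prove this by expanding $(y+\pi^m z)^q$ via the binomial theorem and checking each term: the $k=1$ summand contains the scalar $q=p^f$, which lies in $\pi\mathcal{O}_K$ since $p$ does; the inner summands with $1<k<q$ carry a factor $\binom{q}{k}$ divisible by $p$ and hence by $\pi$; and the $k=q$ summand is $\pi^{mq}z^q\in\pi^{m+1}A$ because $q\ge 2$. Iterating gives the consequence I really need: $x\equiv y\pmod{\pi A}$ implies $x^{q^j}\equiv y^{q^j}\pmod{\pi^{j+1}A}$ for every $j\ge 0$.

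With the sublemma in hand, the inclusion $\Gamma_A(W_{\mathcal{O}_K,\pi}(A))\subset S$, where $S$ denotes the set on the right-hand side, follows by writing
$$u_{n+1}-\sigma(u_n)=\pi^{n+1}a_{n+1}+\sum_{i=0}^{n}\pi^i\bigl(a_i^{q^{n+1-i}}-\sigma(a_i)^{q^{n-i}}\bigr)$$
and applying the sublemma to the congruence $a_i^q\equiv\sigma(a_i)\pmod{\pi A}$ raised to the power $q^{n-i}$; this bounds each summand by $\pi^{n+1}A$. For the reverse inclusion, given $\langle u_n\rangle\in S$, define $(a_n)_n$ recursively by $a_n:=\pi^{-n}\bigl(u_n-\sum_{i=0}^{n-1}\pi^i a_i^{q^{n-i}}\bigr)$. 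The $\sigma$-congruence satisfied by $\langle u_n\rangle$ together with the same sublemma (rewriting $\sigma(u_{n-1})-\sum_i\pi^i a_i^{q^{n-i}}$ in an analogous manner) force the numerator to lie in $\pi^nA$; the absence of $\pi$-torsion then makes the division canonical, so $a_n\in A$ is unambiguously defined, and by construction its first $n+1$ ghost components are $u_0,\ldots,u_n$.

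The main obstacle is the binomial sublemma on $q$th powers, which requires careful bookkeeping of $\pi$-adic valuations, in particular the boundary cases $k=1$ (handled by $v_\pi(q)\ge 1$) and $k=q$ (handled by $q\ge 2$ and $m\ge 1$), plus a correctly chosen base case. Once this ingredient is in place, the two halves of the image characterization reduce to routine manipulations of the ghost-component recursion, and injectivity is essentially immediate from the shape of $\W_{n,\mathcal{O}_K,\pi}$.
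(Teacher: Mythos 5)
Your proposal is correct and follows essentially the same route as the paper: the injectivity comes from the factorisation $\W_{n,\mathcal{O}_K,\pi}(X_0,\ldots,X_n)=\pi^n X_n+\W_{n-1,\mathcal{O}_K,\pi}(X_0^q,\ldots,X_{n-1}^q)$ together with the absence of $\pi$-torsion, and the image characterisation hinges on the congruence $x\equiv y \bmod \pi^m A \Rightarrow x^{q^n}\equiv y^{q^n} \bmod \pi^{m+n}A$, exactly as in the paper. The only real difference is that the paper refers to Bourbaki for this key congruence (noting only that $q\in\pi\mathcal{O}_K$), whereas you prove it directly by the binomial expansion --- a worthwhile addition of detail, but not a different argument.
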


\begin{proof} We proceed along the lines of~(\cite{Bourbaki83}, No~2, Sect.~1, Par.~1 \& 2), replacing multiplication by~$p$ by multiplication by~$\pi$, and replacing $p$ by $q$ in the exponents. The first assertion is a consequence of the equivalence
$$(\star) \quad \begin{array}{ccl}
\Gamma_A((a_n)_n)=\langle u_n\rangle_n & \Leftrightarrow & \left\{\begin{array}{l} u_0=a_0 \\ u_{n+1}=\W_{n,\mathcal{O}_K,\pi}(a_0^q,...,a_n^q)+\pi^{n+1}a_{n+1}\enspace. \end{array}\right.
\end{array}$$
Therefore, for every sequence $\langle u_n\rangle_n \in A^{\Z_{\geq 0}}$, there exists at most one element $(a_n)_n \in W_{\mathcal{O}_K,\pi}(A)$ such that $\Gamma_A((a_n)_n)=\langle u_n\rangle_n$.

\vskip3mm

The second assertion is a consequence of the following relation that can easily be proved in the same way as Lemma~1 of (\cite{Bourbaki83}, No~2, Sect.~1), since $q \in \pi \mathcal{O}_K$~:
$$\forall x,y \in A, \, \forall n \geq 0,  \, \forall m \geq 1 \quad : \quad x \equiv y \mod \pi^m A \quad \Rightarrow \quad x^{q^n}\equiv y^{q^n} \mod \pi^{m+n}A\enspace.$$
In particular, for $m=1$ and for any sequence $(a_n)_n \in W_{\mathcal{O}_K,\pi}(A)$, this implies that 
$$\begin{array}{lcl}
\sigma(\W_{n,\mathcal{O}_K,\pi}(a_0,...,a_n)) & = & \W_{n,\mathcal{O}_K,\pi}(\sigma(a_0),...,\sigma(a_n)) \\
&\equiv  & \W_{n,\mathcal{O}_K,\pi}(a_0^q,...,a_n^q)\mod \pi^{n+1}A \\
&\equiv  & \W_{n+1,\mathcal{O}_K,\pi}(a_0,...,a_{n+1})\mod \pi^{n+1}A
\end{array}$$ 
Therefore, according to $(\star)$, we can prove by iteration on $n\geq 0$ that a sequence $\langle u_n\rangle_n$ is in the image of $\Gamma_A$ if and only if $\sigma(u_n)\equiv u_{n+1} \mod \pi^{n+1}A$ for all $n$. 
\end{proof}

In particular, the $\mathcal{O}_K$-algebra $A=\mathcal{O}_K[(X_n)_n,(Y_n)_n]$, endowed with the $\bo_K$-endomorphism $\sigma$ given by $\sigma(X_n)=X_n^q$ and $\sigma(Y_n)=Y_n^q$, satisfies the above lemma and is such that $\Gamma_A$ is bijective because the relation $\sigma(a)\equiv a^q \mod \pi A$ is satisfied for all $a \in A$. Therefore, the map $\Gamma_A$ transfers the structure of an $\bo_K$-algebra to $W_{\bo_K}(\mathcal{O}_K[(X_n)_n,(Y_n)_n])$. Moreover, for all $n \geq 0$ and all $x \in \bo_K$, this defines polynomials $S_n$ and $P_n$ in $\mathcal{O}_K[X_0,...,X_n,Y_0,...,Y_n]$,  $I_n$ and $C_{x,n}$
 in $\mathcal O_K[X_0,...,X_n]$ and $F_n$ in $\mathcal{O}_K[X_0,...,X_{n+1}]$ 
such that
$$\begin{array}{lcl}
\Gamma_A(S_0,S_1,...) & = & \Gamma_A(X_0,X_1,...)+\Gamma(Y_0,Y_1,...)\enspace, \\ 
\Gamma_A(P_0,P_1,...) & = & \Gamma_A(X_0,X_1,...) \times \Gamma_A(Y_0,Y_1,...)\enspace, \\
\Gamma_A(C_{x,0},C_{x,1},...) & = & x \cdot \Gamma_A(X_0,X_1,...) \enspace, \\
\Gamma_A(F_0,F_1,...) & = &(X^{(1)},X^{(2)},...)\enspace.
\end{array}
$$

\

Now, for any arbitrary $\mathcal{O}_K$-algebra $A$, we endow the set $W_{\mathcal{O}_K,\pi}(A)$ with laws of composition given by 
$$\forall a_n, b_n \in A, \: \forall x \in \bo_K, \: \left\{ \begin{array}{lcl}
(a_n)_n+(b_n)_n & = & (S_n(a_0,...,a_n,b_0,...,b_n))_n \\
(a_n)_n \times (b_n)_n & = &(P_n(a_0,...,a_n,b_0,...,b_n))_n \\
x\cdot (a_n)_n & = &(C_{x,n}(a_0,...,a_n))_n
\end{array}\right.   \enspace.$$

 Moreover, if $\varphi : A' \rightarrow A$ is any homomorphism of $\mathcal{O}_K$-algebras, we define the map $W_{\mathcal{O}_K,\pi}(\varphi):W_{\mathcal{O}_K,\pi}(A')\rightarrow W_{\mathcal{O}_K,\pi}(A)$ component-wise, \textit{i.e.}, $W_{\bo_K,\pi}(\varphi)((a_n)_n)=(\varphi(a_n))_n$. This map commutes with the previous laws of composition.

\

Next, for a fixed $\mathcal{O}_K$-algebra $A$, we consider the $\mathcal{O}_K$-algebra $B=\mathcal{O}_K[(X_a)_{a \in A}]$ which satisfies the assumptions of Lemma~\ref{WittKey} with $\sigma(X_a)=X_a^q$. In particular, the ghost map $\Gamma_B$ induces a bijection between $W_{\mathcal{O}_K,\pi}(B)$ and some subalgebra of $B^{\Z_{\geq 0}}$ that respects the previous laws of composition, which gives $W_{\mathcal{O}_K,\pi}(B)$ the structure of an $\mathcal{O}_K$-algebra. Now, the surjective homomorphism  $\rho: X_a \in B \mapsto a \in A$  yields a surjective map $W_{\mathcal{O}_K,\pi}(\rho):W_{\mathcal{O}_K,\pi}(B)\rightarrow W_{\mathcal{O}_K,\pi}(A)$, which endows $W_{\mathcal{O}_K,\pi}(A)$ with the structure of an $\mathcal{O}_K$-algebra as well, thereby proving the following:

\begin{proposition}[\cite{FontaineFargues}, Lemme~5.1]  The set-valued functor $\mathcal F : \{\mathcal{O}_K\text{-algebras}\} \rightarrow \text{Sets}$ given by $A \mapsto A^{\Z_{\geq 0}}$ factors through a unique $\mathcal{O}_K$-algebra-valued functor 
$$W_{\mathcal{O}_K,\pi}:  \{\mathcal{O}_K\text{-algebras}\} \rightarrow  \{\mathcal{O}_K\text{-algebras}\}$$ 
such that, for any $\mathcal{O}_K$-algebra $A$, the ghost map $\Gamma_{A}:(a_i)_{i\geq 0} \in W_{\mathcal{O}_K,\pi}(A) \mapsto (W_{n,O_K,\pi}(a_0,...,a_n))_{n\geq 0}$ is a homomorphism of $\mathcal{O}_K$-algebras.

In particular,  $W_{\bo_K,\pi}(A)$ is a $\bo_K$-algebra with Witt vector $(0,0,\ldots)$ as the zero element, and Witt vector $(1,0,0,\ldots)$ as the identity element.
\end{proposition}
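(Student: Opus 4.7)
The plan is to follow the universal-polynomial template sketched in the paragraphs immediately preceding the statement, organising it so as to handle the two delicate points at once: the existence of the ring operations on $W_{\bo_K,\pi}(A)$ for a general $\bo_K$-algebra $A$ (which may have $\pi$-torsion), and the uniqueness of the resulting factorisation.

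First I apply Lemma~\ref{WittKey} to the torsion-free $\bo_K$-algebra $B=\bo_K[(X_n)_n,(Y_n)_n]$ endowed with the Frobenius lift $\sigma(X_n)=X_n^q$, $\sigma(Y_n)=Y_n^q$; the lemma shows both that $\Gamma_B$ is injective and that its image is the full subalgebra of compatible sequences. Consequently, each of the coordinate-wise sum, product, and $\bo_K$-scaling of the ghost vectors of $(X_n)_n$ and $(Y_n)_n$ lies in the image of $\Gamma_B$ and can be pulled back to a unique Witt vector, producing the universal polynomials $S_n,P_n\in\bo_K[X_0,\ldots,X_n,Y_0,\ldots,Y_n]$ and $C_{x,n}\in\bo_K[X_0,\ldots,X_n]$ already announced in the text. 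An additive-inverse polynomial $I_n\in\bo_K[X_0,\ldots,X_n]$ is produced in the same way by solving $\Gamma_B((X_n)_n)+\Gamma_B((I_n)_n)=0$ on the ghost side; the Witt vectors $(0,0,\ldots)$ and $(1,0,0,\ldots)$ are identified as the zero and identity by direct computation of their ghost vectors.

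For an arbitrary $\bo_K$-algebra $A$ I then define the operations on $W_{\bo_K,\pi}(A)$ by substituting coordinates into $S_n,P_n,C_{x,n},I_n$. Each axiom of an $\bo_K$-algebra becomes a polynomial identity with coefficients in $\bo_K$ in these universal polynomials; to verify such an identity I prove it once in a suitable torsion-free polynomial algebra $B'=\bo_K[(X_n)_n,(Y_n)_n,(Z_n)_n]$ with the obvious Frobenius lift. On the ghost side the identity reduces to a tautology in the product ring $(B')^{\Z_{\geq 0}}$, so by the injectivity clause of Lemma~\ref{WittKey} it holds as a polynomial identity in $B'$, and therefore specialises to the required relation in $W_{\bo_K,\pi}(A)$. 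By construction $\Gamma_A$ is an $\bo_K$-algebra homomorphism. Functoriality is immediate: any $\bo_K$-algebra homomorphism $\varphi:A\to A'$ induces the componentwise map $W_{\bo_K,\pi}(\varphi)$, which respects the Witt operations because the defining polynomials have coefficients in $\bo_K$. Uniqueness comes from the universal property: any other factorisation making every $\Gamma_A$ an $\bo_K$-algebra homomorphism must, on $A=B$, be given by universal polynomials, and injectivity of $\Gamma_B$ forces these to coincide with $S_n,P_n,C_{x,n},I_n$; naturality in $A$ then propagates the coincidence to every algebra.

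The main obstacle is this middle step: when $A$ has $\pi$-torsion the map $\Gamma_A$ need no longer be injective, so the ring axioms cannot be verified directly by comparing ghost vectors in $A^{\Z_{\geq 0}}$. The standard remedy, which I intend to apply throughout, is to always lift the verification to the universal torsion-free polynomial algebra $B'$ where Lemma~\ref{WittKey} applies in full strength, turn the axiom into a genuine polynomial identity over $\bo_K$, and only then specialise the variables to elements of $A$.
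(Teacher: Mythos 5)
Your proof is correct, and it takes a genuinely different route from the paper's on the key step of establishing the ring axioms. The paper's existence argument follows Fontaine--Fargues: after introducing the universal polynomials $S_n, P_n, C_{x,n}$ from the polynomial ring $\bo_K[(X_n)_n,(Y_n)_n]$, it handles a general (possibly $\pi$-torsion) $\bo_K$-algebra $A$ by passing to the large polynomial algebra $B=\bo_K[(X_a)_{a\in A}]$, which is $\pi$-torsion-free; Lemma~\ref{WittKey} makes $W_{\bo_K,\pi}(B)$ an honest $\bo_K$-algebra via its ghost map, and the surjection $W_{\bo_K,\pi}(\rho)\colon W_{\bo_K,\pi}(B)\twoheadrightarrow W_{\bo_K,\pi}(A)$ induced by $X_a\mapsto a$ then transports the algebra structure onto $W_{\bo_K,\pi}(A)$. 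You instead verify each algebra axiom once and for all as a polynomial identity over $\bo_K$ in a universal torsion-free ring $B'=\bo_K[(X_n)_n,(Y_n)_n,(Z_n)_n]$, deduce it from the corresponding tautology on the ghost side via the injectivity clause of Lemma~\ref{WittKey}, and then specialise the variables to $A$; this is the classical Bourbaki-style ``universal polynomial identity'' argument used for ordinary Witt vectors. Both arguments are standard and equally rigorous: the paper's surjection trick is more economical when one already has the functorial framework in place, whereas your identity-by-identity verification is more self-contained and makes the dependence on the injectivity part of Lemma~\ref{WittKey} (rather than the surjectivity characterisation) completely explicit. Your uniqueness paragraph is essentially the same as what the paper tacitly relies on; for complete clarity you would want to note that the naturality propagation from $B'$ to an arbitrary $A$ again goes through the surjection $\bo_K[(X_a)_{a\in A}]\twoheadrightarrow A$, so that the two arguments converge at that point.
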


\

An important remark is that, if $\pi'$ is another uniformising parameter, there exists a unique isomorphism  of functors, $u_{\pi,\pi'}$, between $W_{\mathcal{O}_K,\pi}$ and $W_{\mathcal{O}_K,\pi'}$, that commutes with the ghost maps (see~Section~5.1 of~\cite{FontaineFargues}). In particular, this is the reason why elements of $W_{\mathcal{O}_K,\pi}(A)$ are simply called \textit{ramified $\bo_K$-Witt vectors}.

\

Let $A$ be an $\mathcal{O}_K$-algebra. There are three maps that play a crucial role in $W_{\mathcal{O}_K,\pi}(A)$. The first is the Teichm\"uller lift $[\_]$, which is multiplicative and given by~:
$$[\_] \quad : \quad a \in A \mapsto [a]=(a,0,0,...) \in W_{\mathcal{O}_K,\pi}(A)\enspace.$$

\

The second is the Frobenius map $F$, defined uniquely by the polynomials $F_n$ introduced above. Precisely, as a consequence of Lemma~\ref{WittKey}, one can prove that this is the unique endomorphism of the  $\mathcal{O}_K$-algebra $W_{\mathcal{O}_K,\pi}(A)$ that satisfies~:
$$\forall (a_n)_n \in W_{\mathcal{O}_K,\pi}(A), \quad \Gamma_A(F(a_0,a_1,...))=\langle a^{(1)},a^{(2)},...\rangle\enspace.$$

As noticed in~\cite{FontaineFargues}, these two maps do not depend on $\pi$, in the sense that they commute with the isomorphism $u_{\pi,\pi'}$ for any other uniformising element $\pi'$.

\

The last map is the Verschiebung map $V_{\pi}$ and it is additive~:
$$V_{\pi}:(a_n)_n \in W_{\mathcal{O}_K,\pi}(A)\mapsto (0,a_0,a_1,...)\in W_{\mathcal{O}_K,\pi}(A)\enspace.$$
Contrary to the others, this map depends on the choice of $\pi$. Precisely, $V_{\pi'}=\frac{\pi'}{\pi}V_{\pi}$. Note also the relation $\Gamma_A(V_{\pi}(a_0,a_1,...))=\langle0,\pi a^{(0)}, \pi a^{(1)},\ldots\rangle$.

\

These maps satisfy the following properties, most of which can be proved after being translated to ghost components using Lemma~\ref{WittKey}~:

\begin{proposition}[(\cite{HazewinkelWittVectors}, Thm.~6.17), (\cite{FontaineFargues}, \S 5.1)] \label{WWF}Let $A$ be an $\mathcal{O}_K$-algebra, we have~:
\begin{enumerate}[i.]
\item The composed map $F V_{\pi}$ is the multiplication by $\pi$ in $W_{\mathcal{O}_K,\pi}(A)$,
 whereas the composed map $V_{\pi}F$ is the multiplication by $(0,1,0,0,...)$. When $A$ has $\pi$-torsion, these two operations correspond to each other.

\item For every $a=(a_0,a_1,...) \in W_{\mathcal{O}_K,\pi}(A)$, we have $F(a)\equiv a^q \mod \pi W_{\bo_K,\pi}(A)$, where $a^q$ is the $q$-th power of $a$ in $W_{\bo_K,\pi}(A)$ and $\pi W_{\bo_K,\pi}(A)$ is the ideal generated by $\pi (1,0,0,...)$. Moreover, if $F(a)=(\alpha_0,\alpha_1,...)$, then $\alpha_n\equiv a_n^q \mod \pi A$ for all $n \geq 0$.
\item If $l/k$ is a finite extension, then $W_{\mathcal{O}_K,\pi}(l)$ is the ring of integers of the unique unramified extension $L/K$ with residue field extension $l/k$.
\end{enumerate}
\end{proposition}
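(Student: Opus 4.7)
The unified strategy for all three parts is to translate every identity and congruence into a statement about ghost vectors, verify it in the universal $\pi$-torsion-free ring $B=\bo_K[(X_a)_{a\in A}]$ endowed with $\sigma(X_a)=X_a^q$ (where the ghost map $\Gamma_B$ is injective by Lemma~\ref{WittKey}), and then descend to $A$ by functoriality of $W_{\bo_K,\pi}$ applied to the canonical surjection $B\twoheadrightarrow A$. Two elementary computations recur: the ghost vector of $V_\pi(a_0,a_1,\ldots)=(0,a_0,a_1,\ldots)$ is $\langle 0,\pi a^{(0)},\pi a^{(1)},\ldots\rangle$ by direct substitution into $\W_{n,\bo_K,\pi}$, while that of $(0,1,0,0,\ldots)$ is $\langle 0,\pi,\pi,\pi,\ldots\rangle$.

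For part~(i), since $F$ shifts ghost vectors one slot to the left, I obtain $\Gamma(FV_\pi(a))=\langle\pi a^{(n)}\rangle_n=\pi\Gamma(a)$, and $\Gamma(V_\pi F(a))=\langle 0,\pi a^{(1)},\pi a^{(2)},\ldots\rangle$, which agrees with the coordinate-wise product of $\Gamma((0,1,0,\ldots))$ and $\Gamma(a)$; injectivity of $\Gamma_B$ yields both identities. For the $\pi$-torsion remark, I would verify using the recursion $(\star)$ that the Witt coordinates of $\pi\cdot(1,0,\ldots)-(0,1,0,\ldots)$ all lie in $\pi\bo_K$, so they project to zero in $W_{\bo_K,\pi}(A)$ whenever $\pi A=0$.

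For part~(ii), the ghost vectors of $F(a)$ and $a^q$ are $\langle a^{(n+1)}\rangle_n$ and $\langle(a^{(n)})^q\rangle_n$ (the latter because $\Gamma$ is a ring homomorphism); the image characterisation in Lemma~\ref{WittKey} applied to $\sigma(X)=X^q$ on $B$ gives $(a^{(n)})^q\equiv a^{(n+1)}\mod\pi^{n+1}B$, so both ghost vectors agree modulo $\pi$ coordinate-wise, yielding $F(a)\equiv a^q\mod\pi W_{\bo_K,\pi}(A)$ after descent. The refinement $\alpha_n\equiv a_n^q\mod\pi A$ follows by induction on $n$ using $(\star)$: solving the recursion for $\alpha_n$ in terms of $a_0,\ldots,a_{n+1}$ and exploiting $q\in\pi\bo_K$ to annihilate the multinomial cross-terms leaves only $a_n^q$ modulo $\pi$.

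Part~(iii) is where I expect the main effort. The plan is to exhibit $W_{\bo_K,\pi}(l)$ as a complete discrete valuation ring with residue field $l$ and uniformiser $\pi$, whence uniqueness of the unramified extension of $K$ with residue field $l$ identifies it with $\bo_L$. The projection $(a_n)_n\mapsto a_0$ is an $\bo_K$-algebra surjection onto $l$ whose kernel is $V_\pi W_{\bo_K,\pi}(l)$; combining $FV_\pi=\pi$ from part~(i) with the bijectivity of $F$ on $W_{\bo_K,\pi}(l)$ then identifies the kernel with $\pi W_{\bo_K,\pi}(l)$. Completeness in the $V_\pi$-adic topology is tautological from the product description $W_{\bo_K,\pi}(l)=l^{\Z_{\geq 0}}$ and coincides with the $\pi$-adic topology by the previous step. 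The hardest sub-step is the bijectivity of $F$ on $W_{\bo_K,\pi}(l)$, which I would establish by constructing a candidate inverse from the inverse Frobenius on the perfect field $l$ extended to Teichm\"uller lifts, and then verifying the two compositions act as the identity on ghost vectors.
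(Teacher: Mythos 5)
The overall strategy of translating to ghost components in the universal $\pi$-torsion-free ring and descending, and the computations in part~(i), are sound and match the hint the paper gives. However there is a genuine gap in your part~(ii). You write that Lemma~\ref{WittKey} applied to $\sigma$ on $B$ gives $(a^{(n)})^q\equiv a^{(n+1)}\mod\pi^{n+1}B$. This is not what the lemma says: the image characterisation gives $\sigma(a^{(n)})\equiv a^{(n+1)}\mod\pi^{n+1}B$, and $\sigma(a^{(n)})$ is \emph{not} $(a^{(n)})^q$ --- since $\sigma$ is the algebra endomorphism sending each generator $X_a$ to $X_a^q$, applied to a polynomial $a^{(n)}$ it differs from taking the $q$-th power. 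The two are only congruent mod $\pi B$, so what you actually get is $(a^{(n)})^q\equiv a^{(n+1)}\mod\pi B$, not mod $\pi^{n+1}B$. Moreover, even granting a coordinate-wise mod-$\pi$ congruence of ghost vectors, the conclusion $F(a)\equiv a^q\mod\pi W_{\bo_K,\pi}(B)$ does not follow: for instance the Witt vector $(\pi,0,0,\ldots)$ has ghost $\langle\pi,\pi^q,\pi^{q^2},\ldots\rangle$, all coordinates divisible by $\pi$, yet dividing by $\pi$ gives $\langle 1,\pi^{q-1},\ldots\rangle$ which fails the compatibility $\sigma(1)\equiv\pi^{q-1}\mod\pi B$, so it is not in $\pi W(B)$. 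The correct route is to show directly that the sequence $v_n:=\bigl(a^{(n+1)}-(a^{(n)})^q\bigr)/\pi$ is a valid ghost vector, i.e. $\sigma(v_n)\equiv v_{n+1}\mod\pi^{n+1}B$; this uses $\sigma(a^{(m)})\equiv a^{(m+1)}\mod\pi^{m+1}B$ together with a binomial expansion of $\sigma(a^{(n)})^q$ and the fact that $q\in\pi\bo_K$ kills the cross-terms, giving $\sigma(a^{(n)})^q\equiv(a^{(n+1)})^q\mod\pi^{n+2}B$. Your refinement $\alpha_n\equiv a_n^q\mod\pi A$ is obtained correctly, but note it does not by itself rescue the Witt-ideal congruence, since $(a^q)_n\ne a_n^q$ in general and Witt subtraction is not coordinate-wise.

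For part~(iii), the plan is reasonable, but the final sub-step is off: you cannot verify bijectivity of $F$ on $W_{\bo_K,\pi}(l)$ ``on ghost vectors'' because $\pi l=0$ and the ghost map collapses (Lemma~\ref{WittKey}'s injectivity requires no $\pi$-torsion). The clean argument is to feed $A=l$, $\pi A=0$, into the refinement of part~(ii): then $\alpha_n=a_n^q$ exactly, so $F$ is the coordinate-wise $q$-power map, which is bijective because the finite field $l$ is perfect. With that, $V_\pi W(l)=V_\pi F(W(l))=\pi W(l)$ by the $\pi$-torsion remark in part~(i), and the rest of your outline (complete DVR with residue field $l$ and uniformiser $\pi$, hence $\bo_L$) goes through.
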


\vskip3mm

\begin{remark} In the language of Hazewinkel, these ramified Witt vectors are ``untwisted". In his paper~\cite{HazewinkelWittVectors}, Hazewinkel describes the functor of even more general Witt vectors, called ``twisted ramified Witt vectors", from a formal group law approach based on the functional equation lemma. These Witt vectors are obtained from the ramified Witt vectors by twisting the Teichm\"uller lift with some Lubin-Tate formal group law. Such a twist is necessary for Hazewinkel to describe all ramified discrete valuation rings with not necessarily finite residue fields.  See also Section~5.1.2 of~\cite{FontaineFargues} for a twisted version of ramified Witt vectors.
\end{remark}

\vskip3mm

\subsubsection*{Link with standard Witt vectors.}
Let $A$ be an $\mathcal{O}_K$-algebra. When $K=\Q_p$ and $\pi=p$, the ramified Witt $\Z_p$-algebra $W_{\Z_p,p}(A)$ is, as a ring, the ring of standard Witt vectors $W(A)$. In particular, one can prove the following (see, for example, the end of Paragraph~5.1 in~\cite{FontaineFargues})~: 

\begin{proposition} Let $K_0$ denote the maximal unramified subextension of $K/\Q_p$. If $A$ is a perfect $\F_q$-algebra, there is a canonical isomorphism~:
$$ W_{\mathcal{O}_K,\pi}(A)   \quad \rightarrow \quad W(A) \otimes_{O_{K_0}} \mathcal{O}_K \enspace,$$
under which the Teichm\"uller lifts correspond to each other, and the Frobenius map in $W_{\mathcal{O}_K,\pi}(A)$ is sent to $F^{f}\otimes \text{Id}$, where $f$ is the residue index of $K/\Q_p$ and $F$ denotes the standard Frobenius map in $W(A)$. 
\end{proposition}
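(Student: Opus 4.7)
The strategy is to construct the canonical isomorphism by matching Teichm\"uller lifts on both sides and then verifying it is a ring isomorphism via reduction modulo~$\pi$, exploiting the fact that perfectness of $A$ yields a clean Teichm\"uller expansion. First I would make precise the $\bo_{K_0}$-algebra structure on $W(A)$: the inclusion $\F_q \hookrightarrow A$ gives, by functoriality of standard Witt vectors, a ring homomorphism $W(\F_q) \to W(A)$, and Proposition~\ref{WWF}(iii), applied to $K_0/\Q_p$ with uniformising parameter $p$, identifies $W(\F_q)$ with $\bo_{K_0}$. This makes $W(A) \otimes_{\bo_{K_0}} \bo_K$ a well-defined $\bo_K$-algebra, and one checks that it is $\pi$-adically complete and $\pi$-torsion-free (because $W(A)$ is $p$-adically complete and $p$-torsion-free, $\bo_K$ is free of finite rank over $\bo_{K_0}$, and the $p$-adic and $\pi$-adic topologies on $\bo_K$ agree).

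Second, I would establish the Teichm\"uller expansion on the ramified side. Since $A$ is an $\F_q$-algebra it has $\pi$-torsion, so Proposition~\ref{WWF}(i) yields $V_\pi F = \pi$ in $W_{\bo_K,\pi}(A)$; combined with the projection formula $V_\pi(x \cdot F(y)) = V_\pi(x) \cdot y$ (which is a consequence of the universal-polynomial construction) and the identity $F([a]) = [a^q]$ for Teichm\"uller lifts (extracted from Proposition~\ref{WWF}(ii) via the formula $F_0(X_0,X_1) = X_0^q + \pi X_1$), one obtains by induction $V_\pi^n([a]) = \pi^n [a^{1/q^n}]$, the $q^n$-th root being well-defined by perfectness. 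Consequently every element of $W_{\bo_K,\pi}(A)$ has a unique $\pi$-adically convergent expansion $(a_0,a_1,\ldots) = \sum_{n \ge 0} \pi^n [a_n^{1/q^n}]$. I would then define $\phi \colon W_{\bo_K,\pi}(A) \to W(A) \otimes_{\bo_{K_0}} \bo_K$ by
\[ \phi\Bigl(\sum_{n \ge 0} \pi^n [a_n^{1/q^n}]\Bigr) = \sum_{n \ge 0} \pi^n \bigl([a_n^{1/q^n}] \otimes 1\bigr)\enspace, \]
which is manifestly $\bo_K$-linear and sends Teichm\"uller lifts to $[a] \otimes 1$.

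Third, to verify that $\phi$ is a ring isomorphism, I would combine a universal-polynomial argument with a mod-$\pi$ Nakayama step. Multiplicativity on Teichm\"uller lifts is immediate since $[ab] \mapsto [ab] \otimes 1 = ([a] \otimes 1)([b] \otimes 1)$; the general ring identities are governed by the polynomials $S_n, P_n, C_{x,n}$ from the construction preceding Proposition~\ref{WWF}, whose compatibility can be checked by lifting to the $\pi$-torsion-free universal $\bo_K$-algebra $\bo_K[(X_n)_n,(Y_n)_n]$ equipped with the Frobenius lift $X_n \mapsto X_n^q$, $Y_n \mapsto Y_n^q$, where Lemma~\ref{WittKey} forces ghost-component equalities to imply Witt-component equalities and these then specialise to $A$. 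Both targets are $\pi$-adically complete and $\pi$-torsion-free---the LHS by uniqueness of the Teichm\"uller expansion---and $\phi$ induces the identity on the common reduction $A$ modulo $\pi$, using $(W(A) \otimes_{\bo_{K_0}} \bo_K)/\pi = W(A) \otimes_{\bo_{K_0}} \F_q = W(A)/pW(A) = A$ (since $\bo_K/\pi = \bo_{K_0}/p = \F_q$). A standard $\pi$-adic approximation argument (inject via $\pi$-torsion-freeness, lift successive approximations via $\pi$-adic completeness) then upgrades this to a global isomorphism.

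Finally, Frobenius compatibility is immediate on Teichm\"uller lifts: $F([a]) = [a^q]$ on the LHS by Proposition~\ref{WWF}(ii) applied as above, while $(F^f \otimes \text{Id})([a] \otimes 1) = [a^{p^f}] \otimes 1 = [a^q] \otimes 1$ on the RHS since $q = p^f$; by $\pi$-adic continuity this equality extends to all of $W_{\bo_K,\pi}(A)$. I expect the main obstacle to be Step~3, namely the detailed reconciliation of the two Witt-vector operations (the ramified ones built from the polynomials $\W_{n,\bo_K,\pi}$ with exponent $q$ and multiplier $\pi$, and the standard ones tensored with $\bo_K$, built from $\W_n$ with exponent $p$ and multiplier $p$); this ultimately amounts to showing that both sides satisfy the universal property of the unique $\pi$-adically complete, $\pi$-torsion-free $\bo_K$-algebra lifting $A$, for which the Teichm\"uller expansion and the universal ghost-component comparison of Lemma~\ref{WittKey} are the crucial tools.
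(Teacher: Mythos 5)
The paper does not actually prove this proposition; it simply cites the end of Paragraph~5.1 of the Fontaine--Fargues preprint and moves on. There is therefore no in-paper argument to compare against, so I will evaluate your sketch on its own merits.

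Your overall route is the standard one and is sound: exploit $V_\pi F = \pi$ (which holds since the $\F_q$-algebra $A$ is killed by $\pi$), $F([a]) = [a^q]$ (checked on the universal torsion-free ring via ghost components and specialised down), and perfectness to get $V_\pi^n([a]) = \pi^n[a^{1/q^n}]$ and hence the unique $\pi$-adic Teichm\"uller expansion; set up the $\bo_{K_0}$-structure on $W(A)$ via Proposition~\ref{WWF}(iii); and then identify both sides as the unique $\pi$-adically complete, $\pi$-torsion-free $\bo_K$-algebra lifting $A$. The Frobenius compatibility $F^f\otimes\mathrm{Id}$ on Teichm\"uller lifts and extension by continuity is correct. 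The one place you are too quick is the claim that $\phi$ is ``manifestly $\bo_K$-linear'': since the Teichm\"uller expansion is \emph{not} additive in its coordinates, additivity of $\phi$ is precisely the nontrivial content, and it cannot be read off from the formula. Your appeal to the universal polynomials $S_n,P_n,C_{x,n}$ does not by itself close this, because the two sides are governed by \emph{different} universal polynomials (the ramified $\W_{n,\bo_K,\pi}$ with multiplier $\pi$ and exponent $q$ versus the standard $\W_n$ with $p$, tensored up to $\bo_K$), so ``lifting to the torsion-free universal algebra'' only handles one side at a time. What is actually needed, and what you correctly identify at the very end as the crux, is the uniqueness theorem for strict $\pi$-rings over a perfect residue ring: any $\pi$-adically complete, $\pi$-torsion-free $\bo_K$-algebra $B$ with $B/\pi \cong A$ perfect carries a unique multiplicative section $[\cdot]\colon A\to B$, and the resulting bijection $W_{\bo_K,\pi}(A)\to B$, $(a_n)\mapsto\sum\pi^n[a_n^{1/q^n}]$, is automatically a ring isomorphism. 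If you state and use that theorem (which is the $\pi$-adic analogue of the theory of strict $p$-rings), everything else in your sketch falls into place cleanly; as written, Step~3 is an outline rather than a proof, but the outline points in the right direction.
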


\section{The power series $\mathcal E_{P,n}^{Q}(X)$}\label{sec2}

In this section we prove Theorem~\ref{MainTheorem}, the core result of the paper.

\subsection{Specific Witt vectors}\label{SpecificWittVectors}

 Recall that $p$ is a prime number and $K$ is a finite extension of $\Q_p$, 
with valuation ring, valuation ideal and residue field denoted by $\bo_K$, $\mathcal P_K$ and $k$ respectively. We let $q=\text{card}(k)$ and fix a uniformising parameter $\pi$ of $\bo_K$. In this section, we follow \cite[\S2.1]{Pulita}, but in our more general setting, in order to construct some specific ramified $\mathcal{O}_K$-Witt vectors with useful properties.

\

We fix a formal power series $P \in \mathcal{O}_K[[X]]$ such that 
\begin{eqnarray}\label{PSpecificWittVectors} P(0)=0 \quad \text{ and } \quad P(X)\equiv X^q\mod \pi \mathcal{O}_K[[X]]\enspace.
\end{eqnarray}
\noindent
This series defines an endomorphism of $\mathcal{O}_K$-algebras~:
$$\begin{array}{cccc}
\sigma_P \quad : & \mathcal{O}_K[[X]] & \rightarrow  & \mathcal{O}_K[[X]] \\
 & h(X) & \mapsto & h(P(X))
 \end{array}$$
with the property $\sigma_P(h)\equiv h^q \mod \pi \mathcal{O}_K[[X]]$ for all $h \in \mathcal{O}_K[[X]]$.

\

The following lemma is a straightforward generalisation of the first statement of \cite[Ch.IX, \S1, Exercise 14]{Bourbaki83} to ramified $\mathcal{O}_K$-Witt vectors~:
\begin{lemma} There is a unique homomorphism of $\mathcal{O}_K$-algebras
$$\begin{array}{cccc}
S_P \quad : & \mathcal{O}_K[[X]]  & \longrightarrow & W_{\mathcal{O}_K,\pi}(\mathcal{O}_K[[X]]) \\
& h & \mapsto & S_P(h)
\end{array}$$
such that, for every $h \in \mathcal{O}_K[[X]]$, the ghost vector of $S_P(h)$ is given by 
$$\langle h(X), h(P(X)), h(P(P(X))),...\rangle \in \mathcal{O}_K[[X]]^{\Z_{\geq 0}}\enspace,$$ 
\textit{i.e.}, such that the $n$-th ghost component of $S_P(h)$ is $\sigma_P^n(h)$, for every $n\geq 0$.
\vskip3mm
\noindent
Moreover, the  homomorphism of $\mathcal{O}_K$-algebras $S_P$ is also characterised by~:
$$F(S_P(h))=S_P(h(P))\enspace.$$

\end{lemma}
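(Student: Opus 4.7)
The plan is to reduce everything to the injectivity and image characterisation of the ghost map established in Lemma~\ref{WittKey}. First I would verify the hypotheses of that lemma for $A=\mathcal{O}_K[[X]]$ with the chosen endomorphism $\sigma_P$: the ring $\mathcal{O}_K[[X]]$ clearly has no $\pi$-torsion, and the congruence $P(X)\equiv X^q \mod \pi\mathcal{O}_K[[X]]$ together with the fact that $\sigma_P$ is an $\mathcal{O}_K$-algebra endomorphism gives $\sigma_P(h)\equiv h^q \mod \pi\mathcal{O}_K[[X]]$ for every $h \in \mathcal{O}_K[[X]]$. Hence $\Gamma_{\mathcal{O}_K[[X]]}$ is injective and its image is exactly the set of sequences $\langle u_n\rangle_n$ with $\sigma_P(u_n)\equiv u_{n+1}\mod \pi^{n+1}\mathcal{O}_K[[X]]$.

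Next, for each $h\in\mathcal{O}_K[[X]]$ the sequence $\langle h,\sigma_P(h),\sigma_P^2(h),\ldots\rangle$ tautologically satisfies $\sigma_P(\sigma_P^n(h))=\sigma_P^{n+1}(h)$, hence lies in the image of $\Gamma_{\mathcal{O}_K[[X]]}$. By injectivity of the ghost map, there is a unique ramified Witt vector $S_P(h)\in W_{\mathcal{O}_K,\pi}(\mathcal{O}_K[[X]])$ whose ghost vector is this sequence. This simultaneously yields existence and uniqueness of the map $S_P$ with the prescribed ghost components.

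To see that $S_P$ is an $\mathcal{O}_K$-algebra homomorphism, observe that the ``diagonal'' map
$$h \mapsto \langle h, \sigma_P(h), \sigma_P^2(h), \ldots\rangle$$
is an $\mathcal{O}_K$-algebra homomorphism from $\mathcal{O}_K[[X]]$ into the product $\mathcal{O}_K[[X]]^{\Z_{\geq 0}}$, since each $\sigma_P^n$ is an $\mathcal{O}_K$-algebra endomorphism and $\mathcal{O}_K$ is fixed componentwise. Because $\Gamma_{\mathcal{O}_K[[X]]}$ is itself a homomorphism of $\mathcal{O}_K$-algebras whose restriction to the image of $S_P$ is bijective onto the diagonal image, $S_P$ must respect the $\mathcal{O}_K$-algebra structures as well.

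Finally, for the Frobenius characterisation, I would translate the identity to ghost components. Using the formula $\Gamma_A(F(a))=\langle a^{(1)},a^{(2)},\ldots\rangle$ from the definition of $F$, we compute
$$\Gamma_{\mathcal{O}_K[[X]]}\bigl(F(S_P(h))\bigr)=\langle \sigma_P(h), \sigma_P^2(h), \sigma_P^3(h), \ldots\rangle = \Gamma_{\mathcal{O}_K[[X]]}\bigl(S_P(h\circ P)\bigr),$$
since $\sigma_P^n(\sigma_P(h))=\sigma_P^{n+1}(h)$. The required identity $F(S_P(h))=S_P(h\circ P)$ then follows from another application of the injectivity of $\Gamma_{\mathcal{O}_K[[X]]}$. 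The entire argument is really an exercise in transferring statements across the ghost map, so there is no serious obstacle; the only thing to be careful about is checking the mild compatibility condition of Lemma~\ref{WittKey}, which in this setting is automatic.
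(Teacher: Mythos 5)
Your proof is correct and takes essentially the same route as the paper: reduce everything to Lemma~\ref{WittKey}'s injectivity and image description of the ghost map, observe that the diagonal sequences $\langle h,\sigma_P(h),\sigma_P^2(h),\ldots\rangle$ lie in the image, pull the algebra structure back through the injective ghost homomorphism, and verify the Frobenius identity on ghost components. You spell out the verification that $\sigma_P(h)\equiv h^q\bmod\pi$ and the transfer-of-structure argument in a bit more detail than the paper does, but the content and logic are the same.
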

\begin{proof}
According to Lemma~\ref{WittKey}, since the $\mathcal{O}_K$-algebra $A:=\mathcal{O}_K[[X]]$ has no $\pi$-torsion and is endowed with the map $\sigma_P$, its ghost map $\Gamma_A : W(A) \rightarrow A^{\Z_{\geq 0}}$ is injective, and each sequence $\langle h(X), h(P(X)), h(P(P(X))),...\rangle$ is clearly in the image of $\Gamma_A$. Therefore, for every formal power series $h \in A$, there is a unique Witt vector $S_P(h) \in W_{\mathcal{O}_K,\pi}(A)$ with ghost components $\langle h(X), h(P(X)), h(P(P(X))),...\rangle$, thereby proving the existence of the map $S_P$. Finally, we prove that this is a homomorphism of $\mathcal{O}_K$-algebras after translating the properties of such a homomorphism in terms of ghost components, according to Lemma~\ref{WittKey}.

\vskip1mm
\noindent
The last assertion is an easy consequence of the definition of the map $F$ by the ghost components. \end{proof}

Now, let $L$ be a finite extension of $K$. We denote by $\mathcal{O}_L$ the valuation ring of $L$, and by $l$ its residue field. Let $a\in\bo_L$ be such that $v_p(a)>0$. By functoriality, the specialisation $\mathcal{O}_K[[X]] \rightarrow \mathcal{O}_L$, given by $X \mapsto a$, provides a homomorphism $W_{O_K,\pi}(\mathcal{O}_K[[X]])\rightarrow W_{O_K,\pi}(\mathcal{O}_L)$. For every such element $a \in \mathcal{O}_L$ and every formal power series $h\in \mathcal{O}_K[[X]]$, we shall denote by $S_{P,a}(h)$ the specialised Witt vector of $W_{\mathcal{O}_K,\pi}(\mathcal{O}_L)$ which is the image of $h$ via the composed homomorphism
$$\mathcal{O}_K[[X]] \begin{array}{c} {S_P} \\ \longrightarrow \\ \, \end{array} W_{\mathcal{O}_K,\pi}(\mathcal{O}_K[[X]]) \begin{array}{c} {X \mapsto a} \\ \longrightarrow \\ \, \end{array} W_{\mathcal{O}_K,\pi}(\mathcal{O}_L)\enspace.$$
In particular, note that the ghost vector of $S_{P,a}(h)$ is $\langle h(a),h(P(a)),h(P(P(a))),...\rangle$.

\

The following proposition is a key ingredient for what follows~:
\begin{proposition}\label{KeyProp} Let $h(X)=\sum_{i \geq 0}a_iX^i$ be a formal power series in $\mathcal{O}_K[[X]]$, let $a \in \mathcal{O}_L$ with $v_p(a) >0$ and write $S_{P,a}(h)=(\alpha_0,\alpha_1,...)\in W_{\mathcal{O}_K,\pi}(\mathcal{O}_L)$. Then, the following statements hold~:
\begin{itemize}
\item[i.] $a_0=0$ if and only if $v_p(\alpha_i)>0$ for all $i \geq 0$~;
\item[ii.] if $a_0\not=0$, then $v_p(a_0)=rv_p(\pi)$ if and only if $v_p(\alpha_i)>0$ for all $0\leq i <r$ and $v_p(\alpha_r)=0$.
\end{itemize}
\end{proposition}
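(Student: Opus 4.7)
The plan is to exploit the recursive form of the ghost relation
\[
h(P^n(a)) \;=\; \alpha^{(n)} \;=\; \sum_{i=0}^n \pi^i \alpha_i^{q^{n-i}},
\]
together with a reduction to the degenerate case $a=0$. Since $S_P(h)$ lives in $W_{\mathcal{O}_K,\pi}(\mathcal{O}_K[[X]])$, each Witt coordinate of $S_{P,a}(h)$ is of the form $\widetilde\alpha_i(a)$ for some $\widetilde\alpha_i(X)\in\mathcal{O}_K[[X]]$, and because $v_p(a)>0$ we have $\widetilde\alpha_i(a)\equiv\widetilde\alpha_i(0)\pmod{\mathcal{P}_L}$. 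Hence ``$v_p(\alpha_i)>0$'' is equivalent to ``$\widetilde\alpha_i(0)\in\mathcal{P}_K$'', and ``$\alpha_i\in\mathcal{O}_L^\times$'' is equivalent to ``$\widetilde\alpha_i(0)\in\mathcal{O}_K^\times$''. Both (i) and (ii) thus reduce to the specialisation at $X=0$.

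At $X=0$, using $P(0)=0$, all ghost components collapse to $h(P^n(0))=h(0)=a_0$. Consequently $S_{P,0}(h)=a_0\cdot(1,0,0,\ldots)$ in $W_{\mathcal{O}_K,\pi}(\mathcal{O}_K)$; write this vector as $(\beta_0,\beta_1,\ldots)\in\mathcal{O}_K^{\mathbb{Z}_{\geq 0}}$. The statement to prove becomes: for $a_0=u\pi^r$ with $u\in\mathcal{O}_K^\times$ and $r\geq 0$, one has $\beta_i\in\mathcal{P}_K$ for $0\leq i<r$ and $\beta_r\in\mathcal{O}_K^\times$; and for $a_0=0$, all $\beta_i$ lie in $\mathcal{P}_K$.

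I would prove this by induction on $i$ using the rearranged ghost relation
\[
\pi^{i+1}\beta_{i+1} \;=\; a_0 - \sum_{j=0}^{i} \pi^j \beta_j^{q^{i+1-j}}.
\]
The key numerical input is that if $\beta_j\in\mathcal{P}_K$ for every $j\leq i$, then each summand on the right has $\pi$-adic valuation at least $i+2$, since
\[
v_\pi\!\left(\pi^j\beta_j^{q^{i+1-j}}\right) \;\geq\; j + q^{i+1-j} \;\geq\; i+2,
\]
using $q\geq 2$ and the elementary bound $q^k\geq k+1$ for $k\geq 0$. Together with $v_\pi(a_0)=r$, this shows $v_\pi(\beta_{i+1})\geq 1$ as long as $i+1\leq r-1$, propagating the inductive hypothesis. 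At the step $i+1=r$, the sum on the right still has $\pi$-valuation $\geq r+1$, while $v_\pi(a_0)=r$, so $v_\pi(\pi^r\beta_r)=r$ and therefore $\beta_r\in\mathcal{O}_K^\times$. The converse directions follow by reading the same estimates in the opposite direction: if $\beta_0,\ldots,\beta_n$ all lie in $\mathcal{P}_K$, then $v_\pi(a_0)\geq n+1$ for every $n$, forcing $a_0=0$ and proving~(i); otherwise, with $r$ the smallest index at which $\beta_r$ is a unit, the same estimate yields $v_\pi(a_0)=r$, proving~(ii).

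The only genuinely conceptual step is the reduction to $X=0$ via the universal lift $S_P(h)$; after that, the argument is clean valuation bookkeeping. I do not foresee any serious obstacle beyond verifying the inequality $j+q^{i+1-j}\geq i+2$ uniformly, which is elementary.
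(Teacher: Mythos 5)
Your proof is correct, and it takes a genuinely different route from the paper's. The paper (following Pulita) applies the Frobenius $F^k$ for large $k$: using Proposition~\ref{WWF} (that the components of $F(a)$ agree with the $q$th powers of the components of $a$ modulo $\pi$), valuation-positivity of each Witt coordinate is invariant under $F$, and since $v_p(P^{(k)}(a))\to\infty$, the ghost components $h(P^{(j+k)}(a))$ of $F^k(S_{P,a}(h))$ all have $p$-adic valuation exactly $v_p(a_0)$ for $k\gg 0$; one then reads the conclusion off the ghost relations by induction on $j$. You instead pass to the universal lift $S_P(h)\in W_{\mathcal O_K,\pi}(\mathcal O_K[[X]])$, observe that the componentwise specialisation $X\mapsto a$ changes nothing modulo $\mathcal P_L$ because $v_p(a)>0$, and so reduce to the clean identity $S_{P,0}(h)=a_0\cdot(1,0,0,\ldots)$, where the valuation bookkeeping is explicit. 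Both arguments ultimately hinge on the same rearranged ghost relation and the same elementary bound (your $j+q^{i+1-j}\geq i+2$, which the paper leaves implicit), but yours avoids the Frobenius machinery entirely and identifies the relevant Witt vector directly rather than asymptotically; it is arguably the more transparent of the two. The reduction step is sound: the Witt coordinates of $S_P(h)$ are power series $\tilde\alpha_i\in\mathcal O_K[[X]]$ (the paper defines $W_{\mathcal O_K,\pi}(\varphi)$ componentwise), $\tilde\alpha_i(a)-\tilde\alpha_i(0)\in a\mathcal O_L\subset\mathcal P_L$, and since each $\alpha_i\in\mathcal O_L$ is either a unit or lies in $\mathcal P_L$, the dichotomy transfers exactly to $\tilde\alpha_i(0)\in\mathcal O_K$.
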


\begin{proof}
We follow Pulita's proof of Lemma~2.2 in~(\cite{Pulita}, Sect.~2.1).  The two assertions can be recovered through the equivalence of the two following statements
\begin{itemize}
\item[(a)] $v_p(a_0)=rv_p(\pi)$~,
\item[(b)] $v_(\alpha_i)>0$ for all $0 \leq i < r$, and $v_p(\alpha_r)=0$~,
\end{itemize}
including the case $r=+\infty$, corresponding to $a_0=0$.

Given $k \geq 0$, condition~$2$ is equivalent to $v_p(\alpha_r^k)=0$ and $v_p(\alpha_i^k)>0$ for all $i<r$, which is again equivalent to condition~2 applied to the Witt vector $F^k(S_{P,a}(h))$ according to the assertion~iii of Proposition~\ref{WWF}. 

We write $(\beta_0,\beta_1,...)$ for the components of the Witt vector $F^k(S_{P,a}(h))$. Its ghost vector is $\langle h(P^{(k)}(a)),h(P^{(k+1)}(a)),...\rangle$, where $P^{(i)}$ denotes the polynomial $P$ composed $i$~times. 

Since $v_p(P(a))\geq \inf(qv_p(a),v_p(\pi)+v_p(a)) \geq v_p(a) >0$, we have that $v_p(P^{(k)}(a))\rightarrow +\infty$ as $k \rightarrow + \infty$. In particular, if $k$ is big enough, then $v_p(h(P^{(i)}(a)))=v_p(a_0)$ for all $i \geq k$.
Therefore, for such value of $k$, the relations between the components of $F^k(S_{P,a}(h))$ and its ghost components give us~:
$$\pi^j\beta_j=h(P^{(j+k)}(a))-(\beta_0^{q^j}+\pi \beta_1^{q^{j-1}}+...+\pi^{j-1}\beta_{j-1}^{q^2}) \quad \forall j \geq 0\enspace. $$
We thus see, by iteration on $j \geq 0$, that $v_p(a_0)=rv_p(\pi)$ if and only if $v_p(\beta_j)>0$ for all $j<r$ and $v_p(\beta_r)=0$, thereby proving the assertion. \end{proof}

\subsection{Formal group exponentials}

Again, $K$ is a finite extension of $\Q_p$, with valuation ring~$\bo_K$ and residue field $k$. We write $\text{card}(k)=q$ for some power $q$ of $p$. We fix a uniformising parameter $\pi$ of $\bo_K$. Let $P\in\mathcal{F}_{\pi}$ be some Lubin-Tate series with respect to $\pi$, and denote by $F_P\in \bo_K[[X,Y]]$ the unique formal group which admits $P$ as an endomorphism (see Subsection~\ref{fgroups}). By the functional equation lemma~(assertion~3 of Proposition~1.6), there exists a formal power series $g \in X\bo_K[[X]]$ with the coefficient of $X$ equal to $1$, and such that $F_P(X,Y)=f_g^{-1}(f_g(X)+f_g(Y))$, where $f_g \in XK[[X]]$ is given by Hazewinkel's functional equation construction applied to $g$, with uniformising parameter $\pi$ and residue cardinality $q$. Moreover, its composition inverse $f_g^{-1}\in XK[[X]]$ equals the exponential $\exp_{F_P}$ of the formal group $F_P$~: 
$$f_g^{-1}=\exp_{F_P}\enspace.$$

\

Now let $h(X)=X\in X\bo_K[[X]]$. Hazewinkel's functional equation construction applied to $h$, $\pi$ and $q$ then gives us $\displaystyle{f_h(X)=X+\frac{X^q}{\pi}+\frac{X^{q^2}}{\pi^2}+\cdots}$, and according to assertion~2 of Theorem~1.5, we have $f_g^{-1}(f_h(X))\in X \bo_K[[X]]$. Therefore, we can make the following definition~:

\begin{definition}\label{EP} Let $P \in \bo_K[[X]]$ be a Lubin-Tate series with respect to $\pi$. We define 
$$E_P(X):=\exp_{F_P}\left(X+\frac{X^q}{\pi}+\frac{X^{q^2}}{\pi^2}+\cdots \right) \in X\bo_K[[X]] \enspace.$$
\end{definition}

\

\begin{notation} Let $F$ be a formal group which endows a set $A$ with a group structure under the action $+_F$. We use the following sigma notation to denote the composition of multiple elements of $A$ using the group law $+_F$:
$$\sum_{j_0\le j\le j_n}^{F}a_j=a_{j_0}+_Fa_{j_0+1}+_F\cdots+_Fa_{j_n}\enspace,$$
where $j_0\in\Z_{\ge0}$ and $j_n\in\Z_{\ge0}\cup\{\infty\}$. Analogously to usual sums, the limits of infinite formal group sums might not always exist, and when they do, they might not be contained in $A$.
\end{notation}



\

Let $L$ be a finite extension of $K$. We provide the group $X\bo_L[[X]]$ with the ``$X$-adic'' topology induced by that of $\bo_L[[X]]$. By what precedes, if $\lambda=(\lambda_0,\lambda_1,...) \in W_{\bo_K, {\pi}}(\bo_L)$, the sum with respect to the formal group law $\displaystyle\sum^{F_P}_{0\le j\le\infty}E_P(\lambda_jX^{q^j})$ defines a formal power series in $X\bo_L[[X]]$. Generalising the Artin-Hasse exponential relative to ramified Witt vectors by the use of formal groups, we thus define~:

\

\begin{definition} \label{EPlambda}For every $\lambda \in W_{\bo_K,{\pi}}(\bo_L)$, the generalised Artin-Hasse exponential relative to $\lambda$ and $P$ is
$$E_P(\lambda,X):=\sum_{0\le j\le \infty}^{F_P}E_P(\lambda_jX^{q^j})=\exp_{F_P}\left(\lambda^{(0)}X+\lambda^{(1)}\frac{X^q}{\pi}+\lambda^{(2)}\frac{X^{q^2}}{\pi^2}+...\right) \in { X\bo_L[[X]]} \enspace.$$

\end{definition}

\

{We then fix} another uniformising parameter for $\bo_K$, {denoted by} $\pi'$, and let $Q\in \bo_K[[X]]$ be a Lubin-Tate series with respect to $\pi'$. We fix a coherent set of roots $\{\omega_i\}_{i>0}$
associated to $Q(X)$, \textit{i.e.}, a sequence of elements of $\bar\Q_p$ such
that $\omega_1\ne0$, $Q(\omega_1)=0$ and $Q(\omega_{i+1})=\omega_{i}$.

\

We also fix $n \geq 1$ and let $L=K_{\pi',n}$ be the $n$th Lubin-Tate extension of $K$ with respect to~$\pi'$. We note that $\bo_L=\bo_K[\omega_n]$. Let $h(X)=X$. {Since $\pi$ and $\pi'$ generate the same ideal in $\bo_K$, the polynomial~$Q$ satisfies Identities~\ref{PSpecificWittVectors}. In particular,} according to Lemma~\ref{WittKey} and Section~\ref{SpecificWittVectors}, the Witt vector {$S_{Q,\omega_n}(h)$} is well defined in $W_{\bo_K,\pi}(\bo_L)$ for every $n\geq 1$, and it is the unique Witt vector in ${W_{\bo_K,\pi}(\bo_L)}$ with ghost vector $\langle \omega_n,\omega_{n-1},...,\omega_1,0,...\rangle \in \bo_L^{\Z_{\geq 0}}$.

\

\begin{definition}\label{DefEPnQ}
We define $$E_{P,n}^Q(X):=E_P({S_{Q,\omega_n}(h)},X) \in X\bo_L[[X]] \enspace,$$
for $h(X)=X \in X\bo_L[[X]]$.
\end{definition}

As an interesting consequence of the properties of these power series, we can give an improvement to the known bound for the radius of convergence of the formal group exponential $\exp_{F_P}(X)$.

\begin{proposition}\label{ExpFPConv}
The power series $\exp_{F_P}(X)$ converges on the disc $\{x\in\mathbb{C}_p:v_p(x)>1/e_K(q-1)\}$, where $e_K=v_{\pi}(p)$ denotes the absolute ramification index of $K$ if $v_{\pi}$ is the discrete valuation on $K$ such that $v_{\pi}(\pi)=1$.
\end{proposition}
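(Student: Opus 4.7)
The plan is to derive, by induction on $N$, a sharp lower bound on the $p$-adic valuations of the coefficients $b_N$ of $\exp_{F_P}(Y)=\sum_{N\ge 1} b_N Y^N$, by exploiting the integrality statement $E_P(X)\in X\bo_K[[X]]$ from Definition~\ref{EP}.

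First, I would use the factorisation
$$E_P(X)=\exp_{F_P}(\phi(X)),\qquad \phi(X):=X+\frac{X^q}{\pi}+\frac{X^{q^2}}{\pi^2}+\cdots,$$
coming from Hazewinkel's functional equation with $h(X)=X$. Expanding $E_P(X)=\sum_n b_n\,\phi(X)^n$ and reading the $X^N$-coefficient of $\phi(X)^n$ as $\sum_{(j_1,\ldots,j_n)}\pi^{-(j_1+\cdots+j_n)}$, summed over tuples $(j_1,\ldots,j_n)\in\Z_{\ge 0}^n$ satisfying $q^{j_1}+\cdots+q^{j_n}=N$, I obtain a triangular recursion
$$b_N \;=\; e_N \;-\; \sum_{1\le n<N} b_n\,[X^N]\phi(X)^n,$$
whose inhomogeneous term $e_N$ (the $X^N$-coefficient of $E_P$) lies in $\bo_K$, and in which only $n\le N$ contribute since $\sum_i q^{j_i}\ge n$.

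The crux is then to control the valuation of $[X^N]\phi(X)^n$. I would invoke the elementary inequality $(q-1)j\le q^j-1$, valid for every integer $j\ge 0$, which applied term-by-term to any admissible tuple gives
$$(q-1)\sum_{i=1}^n j_i \;\le\; \sum_{i=1}^n(q^{j_i}-1) \;=\; N-n,$$
whence $v_p\bigl([X^N]\phi(X)^n\bigr)\ge -(N-n)/(e_K(q-1))$. Combined with the inductive hypothesis $v_p(b_m)\ge -(m-1)/(e_K(q-1))$ for $m<N$, every summand in the recursion has valuation at least $-(N-1)/(e_K(q-1))$; since $v_p(e_N)\ge 0$, the same bound propagates to $b_N$, the base case $b_1=1$ being trivial.

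To finish, for any $y\in\mathbb{C}_p$ with $v_p(y)>1/(e_K(q-1))$, the coefficient bound yields $v_p(b_N y^N)\ge N\bigl(v_p(y)-1/(e_K(q-1))\bigr)+1/(e_K(q-1))\to +\infty$ as $N\to\infty$, so $\exp_{F_P}(y)$ converges on the claimed disc. The main obstacle I anticipate is the careful combinatorial bookkeeping of the $\phi(X)^n$ expansion together with the sharp inequality $(q-1)j\le q^j-1$; this inequality, tight at $j\in\{0,1\}$, is precisely the arithmetic input that upgrades the classical bound (which only sees the absolute ramification) to the stronger bound $1/(e_K(q-1))$ that also incorporates the residue degree of $K/\Q_p$.
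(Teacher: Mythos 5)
Your proof is correct, and it takes a genuinely different route from the paper's.

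The paper's argument is a two-line specialization: since $\exp_{F_P}(\omega_1 X)=E_{P,1}^Q(X)\in X\bo_K[\omega_1][[X]]$ (a fact built out of the ramified Witt-vector machinery of Section~2.1--2.2), and since $\omega_1$ is a uniformiser of $K_{\pi',1}$ so that $v_p(\omega_1)=1/(e_K(q-1))$, one simply writes $x=\omega_1 y$ and notes that an integral power series converges at any $y$ with $v_p(y)>0$. Your proof bypasses the Witt-vector specialization entirely: it only uses $E_P(X)=\exp_{F_P}(\phi(X))\in X\bo_K[[X]]$, which is a direct consequence of Hazewinkel's functional equation lemma (Theorem~\ref{fel}.2 applied to $h(X)=X$), and extracts a sharp coefficient bound $v_p(b_N)\ge -(N-1)/(e_K(q-1))$ by a triangular induction. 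The combinatorial core is the inequality $(q-1)j\le q^j-1$ applied to each tuple $(j_1,\ldots,j_n)$ with $\sum_i q^{j_i}=N$, giving $v_p\bigl([X^N]\phi(X)^n\bigr)\ge -(N-n)/(e_K(q-1))$; the ultrametric inequality then closes the induction since $[X^N]\phi(X)^N=1$ and the $e_N$ lie in $\bo_K$. What your approach buys is self-containedness and an explicit quantitative coefficient estimate that does not require introducing Lubin-Tate division points or ramified Witt vectors at all; what the paper's approach buys is brevity given the infrastructure already in place, since the integrality of $E_{P,1}^Q$ and the valuation of $\omega_1$ were needed elsewhere anyway. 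Both arguments ultimately rest on the same source of integrality (the functional equation lemma) and both are valid.
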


\begin{proof} First, with $h(X)=X$, we know $\exp_{F_P}(\omega_1X)
=E_P({S_{Q,\omega_1}(h)},X)=E_{P,1}^Q(X)$ is a formal power series with integral coefficients, so it converges at any element $x\in\mathbb C_p$ with strictly positive valuation. We know that $v_p(\omega_1)=1/(q-1)e_K$ as $\omega_1$ is a uniformising parameter for $K_{\pi',1}$, therefore $\exp_{F_P}(x)$ converges whenever $v_p(x)>1/(q-1)e_K$.\end{proof}

\begin{remark} The only bound of the radius of convergence of $\exp_{F_P}(X)$ known to the authors was that given by Fr\"ohlich in \cite[Ch.IV, Thm~3]{Frohlich}, which he accredits to Serre. This bound was $1/(p-1)$, so our bound improves this result for all $K\ne\Q_p$. For $K=\Q_p$ and $P(X)=(X+1)^p-1$, we obtain $\exp_{F_P}(X)=\exp(X)-1$ and we see that this bound is optimal. We conjecture that this bound is in fact optimal for all choices of $K$ and $P$.
\end{remark}

{One crucial argument in the proof of Theorem~\ref{MainTheorem} will be provided by the following lemma~:}

\begin{lemma}\label{DecE_P} Let $h(X)=X$. For every $\lambda=(\lambda_0,\lambda_1,...) \in {W_{\bo_K,\pi}(\bo_L)}$, the following equality holds~;
$$E_P({S_{Q,\omega_n}(h)}\lambda,X)=\sum_{0\le j\le n-1} ^{F_P}E_{P,n-j}^Q(\lambda_jX^{q^j}) \enspace.$$
\end{lemma}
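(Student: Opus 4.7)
The plan is to apply the formal group logarithm $\log_{F_P}$ to both sides and reduce to an identity in $K[[X]]$. Since $\log_{F_P}$ is an isomorphism of formal groups over $K$ and hence injective on $X\bo_L[[X]]$, it suffices to check that both sides have the same image under $\log_{F_P}$. Moreover, because the sum on the right is finite and $\log_{F_P}$ converts $+_{F_P}$ into ordinary addition (see identities~\eqref{logexp_identitites}), applying $\log_{F_P}$ to the right-hand side produces $\sum_{j=0}^{n-1}\log_{F_P}\bigl(E_{P,n-j}^{Q}(\lambda_j X^{q^j})\bigr)$.

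For the left-hand side, Definition~\ref{EPlambda} gives
$$\log_{F_P}\bigl(E_P(S_{Q,\omega_n}(h)\cdot\lambda,X)\bigr)=\sum_{m\ge 0}\frac{\bigl(S_{Q,\omega_n}(h)\cdot\lambda\bigr)^{(m)}}{\pi^m}X^{q^m}.$$
Because the ghost map $\Gamma_{\bo_L}$ is an $\bo_K$-algebra homomorphism, ghost components multiply coordinate-wise, so $\bigl(S_{Q,\omega_n}(h)\cdot\lambda\bigr)^{(m)}=\omega_{n-m}\,\lambda^{(m)}$ for $0\le m\le n-1$, and the product vanishes for $m\ge n$ since $\bigl(S_{Q,\omega_n}(h)\bigr)^{(m)}=0$ in that range. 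Thus the left-hand side collapses to $\displaystyle\sum_{m=0}^{n-1}\frac{\omega_{n-m}\,\lambda^{(m)}}{\pi^m}X^{q^m}$.

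For each summand on the right, Definitions~\ref{EPlambda} and~\ref{DefEPnQ}, together with the explicit ghost components $\omega_{n-j},\omega_{n-j-1},\dots,\omega_1,0,\dots$ of $S_{Q,\omega_{n-j}}(h)$, give $\log_{F_P}\bigl(E_{P,n-j}^{Q}(Y)\bigr)=\sum_{k=0}^{n-j-1}\omega_{n-j-k}Y^{q^k}/\pi^k$. Substituting $Y=\lambda_j X^{q^j}$ and summing over $j$, the reindexing $m=j+k$ converts the double sum into
$$\sum_{m=0}^{n-1}\frac{\omega_{n-m}}{\pi^m}X^{q^m}\sum_{j=0}^{m}\pi^j\lambda_j^{q^{m-j}}=\sum_{m=0}^{n-1}\frac{\omega_{n-m}\,\lambda^{(m)}}{\pi^m}X^{q^m},$$
using the explicit formula $\lambda^{(m)}=\W_{m,\bo_K,\pi}(\lambda_0,\dots,\lambda_m)=\sum_{j=0}^{m}\pi^j\lambda_j^{q^{m-j}}$. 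This matches the expression obtained for the left-hand side, so injectivity of $\log_{F_P}$ concludes the proof.

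The only point requiring care is the ghost-component identity $(\mu\cdot\lambda)^{(m)}=\mu^{(m)}\lambda^{(m)}$: it is ultimately a consequence of Lemma~\ref{WittKey}, which ensures that the Witt ring structure defined via universal polynomials over a $\pi$-torsion-free $\bo_K$-algebra can be transferred through the injective ghost map to yield a genuine ring homomorphism. Everything else is routine reindexing of finite sums together with the defining property that $\log_{F_P}$ intertwines the two group laws.
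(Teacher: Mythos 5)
Your proof is correct and is essentially the same as the paper's: you apply $\log_{F_P}$ to both sides and verify the resulting ordinary power-series identity via ghost-component multiplicativity and a reindexing $m=j+k$, whereas the paper writes both sides directly as $\exp_{F_P}$ of the same expression; these are logically equivalent presentations of the identical computation.
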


\begin{proof}
{On the one hand, using Definition~\ref{EPlambda} and the multiplicativity of the ghost map for ramified Witt vectors, we get successively~:}
$${ \begin{array}{lcl}
E_P(S_{Q,\omega_n}(h)\lambda,X) 
 & = & \exp_{F_P}\left(\omega_n\lambda^{(0)}X+\omega_{n-1}\lambda^{(1)}\frac{X^q}{\pi}+\cdots + \omega_1\lambda^{(n-1)}\frac{X^{q^{n-1}}}{\pi^{n-1}} \right) \\
 & = &  \exp_{F_P}\left(\sum_{0\leq l \leq n-1}\omega_{n-l}\sum_{0\leq j \leq l}\lambda_j^{q^{l-j}}\frac{X^{q^l}}{\pi^{l-j}}\right) \\
& = &   \exp_{F_P}\left(\sum_{0\leq j \leq n-1}\sum_{0 \leq k \leq n-1-j}\omega_{n-j-k}\lambda_j^{q^k}\frac{X^{q^{j+k}}}{\pi^k}\right)
  \end{array}}$$

{On the other hand, using Definition~\ref{DefEPnQ}, Definition~\ref{EPlambda} and Identities~\ref{logexp_identitites}, we also get~:}
$$
{\begin{array}{lcl}
\displaystyle\sum_{0\le j\le n-1} ^{F_P}E_{P,n-j}^Q(\lambda_jX^{q^j})  & = & \displaystyle\sum_{0\le j\le n-1} ^{F_P}E_{P}(S_{Q,\omega_{n-j}}(h),\lambda_jX^{q^j}) \\
& = &  \displaystyle\sum_{0\le j\le n-1} ^{F_P} \exp_{F_P}(\omega_{n-j}\lambda_jX^{q^j}+\omega_{n-j-1}\lambda_j^q\frac{X^{q^{j+1}}}{\pi}+\cdots + \omega_1\lambda_j^{q^{n-j-1}}\frac{X^{q^{n-1}}}{\pi^{n-j-1}}) \\
&=& \exp_{F_P}(\sum_{0\le j\le n-1}\sum_{0\leq k \leq n-j-1}\omega_{n-j-k}\lambda_j^{q^k}\frac{X^{q^{j+k}}}{\pi^k}),
\end{array}}$$

{thereby proving the desired equality.}
\end{proof}

\begin{proposition}\label{EPOverConv} Let $h(X)=X \in \bo_K[[X]]$. For every Witt vector $\lambda {=(\lambda_0,\lambda_1,...)} \in {W_{\bo_K,\pi}(\bo_L)}$, if $v_p(\lambda_i)>0$ for all $i\in\{0,...,n\}$, the series $E_P({S_{Q,\omega_n}(h)}\lambda,X)$ is over-convergent
, \textit{i.e.}, it converges on the closed disk $\mathbb D=\{x \in \mathbb C_p, \: v_p(x) \geq 0\}$. Moreover, {$E_P(S_{Q,\omega_n}(h)\lambda,x)$} has strictly positive valuation for all $x \in \mathbb D$.
\end{proposition}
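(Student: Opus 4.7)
The plan is to invoke the decomposition supplied by Lemma~\ref{DecE_P}, which writes
$$E_P(S_{Q,\omega_n}(h)\lambda,X) = \sum_{j=0}^{n-1}{}^{F_P} E_{P,n-j}^{Q}(\lambda_j X^{q^j}),$$
as a formal group sum of only finitely many pieces. Because $F_P(X,Y)\in\bo_K[[X,Y]]$ satisfies $F_P(X,Y)\equiv X+Y\pmod{(X,Y)^2}$ with integral coefficients, repeated substitution of elements of $\bp_{\mathbb{C}_p}$ into $F_P$ always converges and yields an element of $\bp_{\mathbb{C}_p}$. Therefore it will suffice to prove that, for each $j\in\{0,\ldots,n-1\}$ and every $x\in\mathbb{D}$, the evaluation $E_{P,n-j}^{Q}(\lambda_j x^{q^j})$ is well defined and has strictly positive valuation.

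For each such $j$, Definition~\ref{DefEPnQ} guarantees that $E_{P,n-j}^{Q}(Y)\in Y\bo_L[[Y]]$; write it as $\sum_{k\ge 1} c_k^{(j)} Y^k$ with $c_k^{(j)}\in\bo_L$. For $x\in\mathbb{D}$ we have $v_p(\lambda_j x^{q^j})=v_p(\lambda_j)+q^j v_p(x)\ge v_p(\lambda_j)>0$ by hypothesis on the Witt components. Consequently
$$v_p\bigl(c_k^{(j)}(\lambda_j x^{q^j})^k\bigr)\ge k\,v_p(\lambda_j)\longrightarrow +\infty,$$
so the series converges, and moreover
$$v_p\bigl(E_{P,n-j}^{Q}(\lambda_j x^{q^j})\bigr)\ge v_p(\lambda_j x^{q^j})>0.$$
Assembling these $n$ values via the formal group law as described above produces an element of $\bp_{\mathbb{C}_p}$, yielding both the asserted convergence on the closed unit disc and the positivity of the valuation.

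The key point I expect to be most delicate is recognising that one should not attempt to estimate directly the single argument
$\sum_{k=0}^{n-1}\omega_{n-k}\lambda^{(k)}X^{q^k}/\pi^k$ appearing inside $\exp_{F_P}$: because of the negative contributions $-k\,v_p(\pi)$ coming from the denominators $\pi^k$ and the fact that $v_p(\lambda_j)$ is only assumed to be positive (and could be arbitrarily small), the crude term-by-term estimate does not place this argument inside the disc of convergence of $\exp_{F_P}$ furnished by Proposition~\ref{ExpFPConv}. The decomposition of Lemma~\ref{DecE_P} is precisely what sidesteps this difficulty, by repackaging the whole expression as a formal group sum of evaluations of integral power series at arguments of strictly positive valuation, where the required estimates become immediate.
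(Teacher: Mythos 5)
Your proof is correct and follows essentially the same route as the paper's: decompose via Lemma~\ref{DecE_P} into a finite formal group sum, observe that each term $E_{P,n-j}^Q(\lambda_j X^{q^j})$ lies in $\lambda_j X^{q^j}\bo_L[[X]]$ and hence converges on $\mathbb D$ to an element of $\bp_{\mathbb C_p}$, and reassemble with the integral law $F_P$. The paper's own proof is terser but identical in substance, and your closing remark about why one cannot estimate the single argument of $\exp_{F_P}$ directly correctly identifies the role of the decomposition lemma.
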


\begin{proof} According to Lemma~\ref{DecE_P}, the series $E_P({S_{Q,\omega_n}(h)}\lambda,X)$ is a finite sum with respect to the formal group law $F_P$. Therefore, it is over-convergent if and only if each term of the sum is over-convergent and has strictly positive valuation when evaluated at $x\in \mathbb C_p$ with $v_p(x)\geq 0$. But this is a consequence of the property that $E_{P,n-j}^Q(\lambda_jX^{q^j})\in \lambda_jX^{q^j}\bo_K[[X]]$. The last assertion is therefore trivial.
\end{proof}

\subsection{Proof of Theorem~\ref{MainTheorem}}
We can now prove our main theorem on the properties of the formal power series $\mathcal E_{P,n}^{Q}(X)$~:


\begin{proof1}

\noindent\textbf{Part 1.}
According to identity~\ref{logexp_identitites} of Subsection~\ref{fgroups}, we have
\begin{eqnarray}\nonumber
\mathcal{E}^Q_{P,n}(X)&=&\exp_{{F_P}}\left(\sum_{i=0}^{n-1}\frac{\omega_{n-i}(X^{q^i}-X^{q^{i+1}})}{\pi^{i}}\right)\\
&=&\exp_{F_P}(\pi\omega_{n+1}X)+_{F_P}\exp_{F_P}\left(\sum_{i=0}^{n}\omega_{n+1-i}\left(\frac{\omega_{n-i}}{\omega_{n+1-i}}-\pi\right)\frac{X^{q^i}}{\pi^i}\right)\nonumber\\
&=&\exp_{F_P}(\pi\omega_{n+1}X)+_FE_P({S_{Q,\omega_{n+1}}(h)}\lambda,X)\label{EQP}
\end{eqnarray}
with {$h(X)=X$ and } $\lambda=S_{{Q,\omega_{n+1}}}(f)$ for $f(X)=\frac{Q(X)}{X}-\pi$.

\

We have to prove that each term in this sum is over-convergent and has strictly positive valuation when evaluated at some $x\in\mathbb{C}_p$ with $v_p(x)\ge 0$. 

\

{We write $\lambda=(\lambda_0,\lambda_1,...)$}. First, the constant term of $f(X)$ is $\pi'-\pi$. Therefore, if $\pi\equiv\pi' \mod \mathcal P_K^{n+1}$, then $v_p(\lambda_i)>0$ for all $i\in\{0,...,n\}$ by Proposition~\ref{KeyProp}. So,  according to Proposition~\ref{EPOverConv},  the series {$E_P(S_{Q,\omega_{n+1}}(h)\lambda,X)$}, {which lies in $X\bo_K[\omega_{n+1}][[X]]$}, is over-convergent and has strictly positive valuation when evaluated at any $x$ with $v_p(x)\geq 0$.

\

On the other hand, 
we know from the proof of Proposition~\ref{ExpFPConv} {and Definition~\ref{DefEPnQ}} that $\text{exp}_{F_P}(\omega_1X)\in X \bo_K [\omega_1][[X]]$. Substituting $X$ with $(\pi\omega_{n+1}/\omega_1)X$ and noting that $v_p(\pi)>v_p(\omega_1)$, we then see that the series $\text{exp}_{F_P}(\pi\omega_{n+1}X)$ belongs to $(\pi\omega_{n+1}/\omega_1)X\bo_K{[\omega_{n+1}]}[[X]]$. In particular, it is over-convergent by Proposition~\ref{ExpFPConv}, and it has positive valuation when evaluated at any $x$ with $v_p(x)\geq 0$. 

\

{Therefore, the series $\mathcal{E}^Q_{P,n}(X)$ lies in $X\bo_K[\omega_{n+1}][[X]]$, and thus in $X\bo_K[\omega_n][[X]]$, since it belongs to $K_{\pi,n}[[X]]$ by definition. Moreover, it is over-convergent if $\pi\equiv \pi' \mod \mathcal P_K^{n+1}$, thereby proving Part~1. }

\

\noindent\textbf{Part 2.} 

In Part 1, we saw that $\exp_{F_P}(\pi{\omega_{n+1}}X)\in {(\pi\omega_{n+1}/\omega_1)X\bo_K[\omega_{n+1}][[X]]}$. We know that $v_p(\pi\omega_{n+1}/\omega_1)>v_p(\omega_n)$, therefore using expression (\ref{EQP}) above and the fact that $F_P\in\bo_K[[X,Y]]$ we are left with showing ${E_P(S_{Q,\omega_{n+1}}(h)\
 \lambda,X)}\equiv \omega_nX\mod\omega_n^2X\bo_K{[\omega_{n+1}]}[[X]]$.

\

Let $\bo_L$ be the valuation ring of some finite extension $L$ of $K$. Let $\nu=(\nu_i)_i \in W_{\bo_K,\pi}(\bo_L)$ and suppose that, for some $j$, $v_p(\nu_i)\ge v_p(\nu_j)$ for all $i\in \Z$. In Definition \ref{EP} we saw that $E_{P}(X)\in X\bo_K[[X]]$. Therefore, 
from Definition \ref{EPlambda} and the fact that $F_P(X,Y)$ has integral coefficients, we see that $E_P(\nu,X)\in\nu_j X\bo_L[[X]]$. 

\

{In particular, let $L=K(\omega_m)=K_{\pi',m}$ for some $m$ with $0<m\leq n$. For }$\nu=S_{{Q},\omega_m}(X)$, we know from Proposition \ref{KeyProp} that $v_p(\nu_i)>0$ for all $i$.  As $\nu\in W_{\bo_K {, \pi}}(\bo_K[\omega_m])$ we must then have $v_p(\nu_i)\ge v_p(\omega_m)$ for all $i$, and so 
$$E^{Q}_{P,j}(X)=E_P(S_{P,\omega_j}(X),X)\in\omega_jX\bo_{K}[\omega_j][[X]]\enspace.$$

\

Recall from Part 1 that $\lambda =(\lambda_i)_i=S_{{Q},\omega_{n+1}}(f)$ with $f(X)=\frac{Q(X)}{X}-\pi$ and that $v_p(\lambda_i)>0$ for all $i\in\{0,...,n\}$. Therefore, for $i\in\{0,...,n\}$ we have $v_p(\lambda_i)\ge v_p(\omega_{n+1})$, as $\lambda\in W_{\bo_K{, \pi'}}(\bo_K[\omega_{n+1}])$. We then have 
$$E^{Q}_{P,n+1-j}(\lambda_jX^{q^j})\in \lambda_j\omega_{n+1-j}X\bo_K[\omega_{n+1-j}][[X]]\enspace.$$
For all $1\le j\le n$ we have $v_p(\lambda_j\omega_{n+1-j})>v_p(\omega_n)$ and from Lemma \ref{DecE_P} we know that
$$E_P(S_{P,\omega_{n+1}}(X)\lambda,X)=\sum_{0\le j\le n} ^{F_P}E_{P,n+1-j}^Q(\lambda_jX^{q^j}) \enspace.$$

It is therefore sufficient to prove that $E_{P,n+1}^Q(\lambda_0X)\equiv \omega_nX\mod\omega_n\omega_{n+1}\bo_K[\omega_{n+1}][[X]]$.

By definition the coefficient of $X$ in the formal group exponential $\exp_{F_P}(X)$ is equal to~$1$. Therefore, if 
$$E^Q_{P,n+1}(X)=\exp_{F_P}\left(\omega_{n+1}X+\omega_{n}\frac{X^q}{\pi}+\cdots+\omega_1\frac{X^{q^{n}}}{\pi^n}\right)=\sum_{i\ge 1}a_iX^i\enspace,$$
then $a_1=\omega_{n+1}$. Recall that $a_i\in\omega_{n+1}\bo_K[\omega_{n+1}][[X]]$, thus
\begin{eqnarray*}E^Q_{P,n+1}(\lambda_0X)&\equiv&a_1\lambda_0X\mod\omega_{n+1}\lambda_0^2\bo_K[\omega_{n+1}[[X]]\\
&\equiv&\omega_{n+1}\lambda_0X\mod\omega_{n+1}\lambda_0^2\bo_K[\omega_{n+1}][[X]]\enspace.
\end{eqnarray*}
By definition, we see that $\lambda_0=\lambda^{(0)}=f(\omega_{n+1})=\omega_n/\omega_{n+1}-\pi$, and so 
$$E^Q_{P,n+1}(X)\equiv\omega_{n}X\mod(\omega_n^2/\omega_{n+1})\bo_K[\omega_{n+1}][[X]]\enspace,$$
which proves the result  since $v_p(\omega_n)>v_p(\omega_{n+1})$ {and since $\mathcal E_{P,n}^Q\in X\bo_K[\omega_n][[X]]$}.
\end{proof1}

\section{Applications}\label{applications}

We recall that $p$ is a rational prime, $K$ is a finite extension of $\Q_p$, $\pi$ and $\pi'$ are uniformising parameters of $\bo_K$ and $q$ is the cardinality of the residue field of $K$. 
We let $P\in\mathcal{F}_{\pi}$ (resp. $Q\in\mathcal{F}_{\pi'}$) be Lubin-Tate series with respect to $\pi$ (resp. $\pi'$), and $F_P(X,Y)$ (resp. $F_{Q}(X,Y)$) be the unique formal group with coefficients in $\bo_K$ that admits $P$ (resp. $Q$) as an endomorphism.

\subsection{Lubin-Tate division points}
Since the extensions $K_{\pi,n}$ are finite and abelian over $K$ for all choices of $n$ and $\pi$, we have $K_{\pi',n}=K_{\pi,n}$ exactly when their norm groups are equal \cite[Appendix, Theorem 9]{Washington}.
An exact description of the norm group $N_{K_{\pi,n}/K}(K_{\pi,n}^\times)$ was computed in \cite[Proposition 5.16]{Iwasawa}. 
Namely, $N_{K_{\pi,n}/K}(K_{\pi,n}^\times)=\langle\pi\rangle\times 1+\bp_K^{n}$. This implies $K_{\pi,n}=K_{\pi',n}$ if and 
only if $\pi\equiv\pi'\mod\bp_K^{n+1}$.

\

We now prove Proposition~\ref{LT_div_pt_thm}, which shows how values of the power series $\mathcal{E}^{Q}_{P,n}(X)$ give expressions for any $n$th Lubin-Tate division point with respect to $P$ in terms of $n$th Lubin-Tate division points with respect to $Q$ whenever $\pi\equiv\pi'\mod\bp_K^{n+1}$.


\begin{proof2}
We assume that $\pi\equiv\pi'\mod\bp_K^{n+1}$. We proceed by induction on $m$, with $0<m\leq n$.

From Identities \ref{logexp_identitites} and \ref{a_action} of Subsection~\ref{fgroups}, we have
\begin{eqnarray*}
[\pi]_P\left(\mathcal{E}^{Q}_{P,1}(X)\right)
&=&[\pi]_P\left(\exp_{F_P}\left(\omega_{1}(X-X^{q})\right)\right)\\
&=&\exp_{F_P}\left(\pi\omega_1(X-X^q)\right)\\
&=&\exp_{F_P}(\pi\omega_1X)-_{F_P}\exp_{F_P}(\pi\omega_1X^q)\enspace.
\end{eqnarray*}

From Proposition~\ref{ExpFPConv} we know that $\exp_{F_P}(X)$ converges on the disc $\{x\in\mathbb{C}_p~:v_p(x)>1/e_K(q-1)\}$. Also, in the proof of Proposition~\ref{ExpFPConv} {and according to Definition~\ref{DefEPnQ}}, we saw that $\exp_{F_P}(\omega_1 X)\in X\bo_K [\omega_1][[X]]$, and so $\exp_{F_P}(\pi\omega_1)$ will have positive valuation. 
We can therefore evaluate both the left and right hand side above at $1$ and get~:

$$[\pi]_P\left(\mathcal{E}^{Q}_{P,1}(1)\right)=\exp_{F_P}(\pi\omega_1)-_{F_P}\exp_{F_P}(\pi\omega_1)=0\enspace.$$
Therefore, $\mathcal{E}^{Q}_{P,1}(1)$ is a $[\pi]_P$-division point. 
Moreover, from Theorem~\ref{MainTheorem} Part 2 we know that $\mathcal{E}^{Q}_{P,1}(X)\equiv \omega_{1}X\mod \omega_1^2X\bo_{K}[\omega_1][[X]]$, and so this division point is primitive.

\

Now let $k\le m$ and assume the result holds for $m=1,\ldots k-1$. Again, using Identities \ref{logexp_identitites} and \ref{a_action}, we have
\begin{eqnarray*}
[\pi]_P\left(\mathcal{E}^{Q}_{P,m}(X)\right)
&=&[\pi]_P\left(\exp_{{F_P}}\left(\sum_{i=0}^{m-1}\frac{\omega_{m-i}(X^{q^i}-X^{q^{i+1}})}{\pi^i}\right)\right)\\
&=&\exp_{{F_P}}\left(\sum_{i=0}^{m-1}\frac{\pi\omega_{m-i}(X^{q^i}-X^{q^{i+1}})}{\pi^{i}}\right)\\
&=&\exp_{F_P}\left(\pi\omega_m(X-X^q)+\sum_{i=1}^{m-1}\frac{\omega_{m-i}(X^{q^{i}}-X^{q^{i+1}})}{\pi^{i-1}}\right)\\
&=&\exp_{F_P}\left(\pi\omega_m(X-X^q)+\sum_{i=0}^{(m-1)-1}\frac{\omega_{(m-1)-i}(X^{q^{i+1}}-X^{q^{i+2}})}{\pi^{i}}\right)\\
&=&\exp_{F_P}(\pi\omega_mX)-_{F_P}\exp_{F_P}(\pi\omega_mX^q)+_{F_P}\mathcal{E}^{Q}_{P,m-1}(X^q)\enspace.
\end{eqnarray*}

From Theorem~\ref{MainTheorem} Part 1 we know that if $\pi\equiv\pi'\mod\bp_K^{n+1}$, then $\mathcal{E}^{Q}_{P,m-1}(X^q)$ is over-convergent. From Proposition \ref{ExpFPConv} we know that $\exp_{F_P}(X)$ converges on the disc $\{x\in\mathbb{C}_p:v_p(x)>1/e_K(q-1)\}$. Therefore, all the power series on the right hand side of this equation are overconvergent. Also, $\exp_{F_P}(\pi\omega_m)$ and $\mathcal{E}^{Q}_{P,m-1}(1)$ both have positive valuations, so the formal group operations on these values are well defined. We can therefore evaluate both the left and right hand side at $1$~:

\begin{equation}
[\pi]_P\left(\mathcal{E}^{Q}_{P,m}(1)\right)
=\exp_{F_P}(\pi\omega_m)-_{F_P}\exp_{F_P}(\pi\omega_m)+_{F_P}\mathcal{E}^{Q}_{P,m-1}(1)
=\mathcal{E}^{Q}_{P,m-1}(1)\enspace.
\label{coherent}
\end{equation}

By the induction hypothesis, we know that $\mathcal{E}^{Q}_{P,m-1}(1)$ is a primitive $[\pi^{m-1}]_P$-division point and therefore $\mathcal{E}^{Q}_{P,m}(1)$ is a primitive $[\pi^m]_P$-division point. Part 1 now follows by induction.

Part 2 then follows directly from Part 1 and Equation~\ref{coherent} above.
\end{proof2}


\subsection{Galois action on $\mathcal{E}^{Q}_{P,n}(1)$}
From Proposition~\ref{LT_div_pt_thm} we know that $K_{\pi,n}=K(\mathcal{E}_{P,n}^{Q}(1))$.
We will now give a complete description of how $\Gal(K_{\pi,n}/K)$ acts on $\mathcal{E}_{P,n}^{Q}(1)$.

\

We know that $\mathcal{E}_{P,n}^{Q}(1)$ is a primitive $n$th Lubin-Tate division point with respect to $P$. From standard theory (see \cite[\S6-7]{Iwasawa}, specifically Theorem 7.1) we know that the elements of $\Gal(K_{\pi,n}/K)$ are those automorphisms such that 
$\mathcal{E}_{P,n}^{Q}(1)\mapsto [u]_P(\mathcal{E}_{P,n}^{Q}(1))$,
where $u$ runs over a set of representatives of $\bo_K^{\times}/(1+\bp_K^n)$, for example 
$$\left\{\sum_{i=0}^{n-1}z_i\pi^i:z_i\in\mu_{q-1}\cup\{0\}, z_0\ne0\right\}\enspace.$$
We now prove Proposition~\ref{thm3}, which gives us a complete description of $\Gal(K_{\pi,n}/K)$ in terms of values of our power series.

 \begin{proof3}
 From the definition of $\mathcal E^{Q}_{P,n}(X)$ and Identity \ref{a_action} of Subsection~\ref{fgroups}, we have
\begin{eqnarray}
[\sum_{i=0}^{n-1}z_i\pi^i]_P(\mathcal E^{Q}_{P,n}(X)) 
&=&[\sum_{i=0}^{n-1}z_i\pi^i]_P\left(\exp_{{F_P}}\left(\sum_{i=0}^{n-1}\frac{\omega_{n-i}(X^{q^i}-X^{q^{i+1}})}{\pi^i}\right)\right)\nonumber \\
&=&\exp_{{F_P}}\left(\left(\sum_{j=0}^{n-1}z_j\pi^j\right)\sum_{i=0}^{n-1}\frac{\omega_{n-i}(X^{q^i}-X^{q^{i+1}})}{\pi^i}\right)\enspace.\nonumber\end{eqnarray}

Using Identity \ref{logexp_identitites} and the observation that $z_j=z_j^{q^i}$ for all $i$ and $j$, we then see that this is equal to
\begin{equation}\sum_{0\le j\le n-1}^{F_P}\exp_{{F_P}}\left(\sum_{i=0}^{n-1}\frac{\omega_{n-i}((z_{j}X)^{q^i}-(z_{j}X)^{q^{i+1}})}{\pi^{i-j}}\right)\enspace.\label{second_exp}\end{equation}

We now develop each term of this expression. For all $1\le j\le n-1$, we get~:
\begin{eqnarray}
&& \exp_{{F_P}}\left(\sum_{i=0}^{n-1}\frac{\omega_{n-i}((z_{j}X)^{q^i}-(z_{j}X)^{q^{i+1}})}{\pi^{i-j}}\right) \\
&=& \exp_{F_P}\left(\sum_{i=0}^{j-1}\frac{\omega_{n-i}((z_jX)^{q^i}-(z_jX)^{q^{i+1}})}{\pi^{i-j}}+\sum_{i=j}^{n-1}\frac{\omega_{n-i}((z_jX)^{q^i}-(z_jX)^{q^{i+1}})}{\pi^{i-j}}  \right)\nonumber\\
&=& \exp_{F_P}\left(\sum_{i=0}^{j-1}\pi^{j-i}\omega_{n-i}((z_jX)^{q^i}-(z_jX)^{q^{i+1}})\right)+_{F_P}\exp_{F_P}\left(\sum_{i=0}^{n-j-1}\frac{\omega_{n-j-i}((z_jX)^{q^{i+j}}-(z_jX)^{q^{i+j+1}})}{\pi^{i}}  \right)\nonumber\\
&=&\mathcal{E}^{Q}_{P,n-j}((z_jX)^{q^j})+_{F_P}\sum_{0\le i\le j-1}^{F_P}\left(\exp_{F_P}(\pi^{j-i}\omega_{n-i}z_jX^{q^i})-_{F_P}\exp_{F_P}(\pi^{j-i}\omega_{n-i}z_jX^{q^{i+1}})\right)\label{final_exp}\enspace.
\end{eqnarray}

Similarly to before, from Theorem~\ref{MainTheorem} Part 1 we know that if $\pi\equiv\pi'\mod\bp_K^{n+1}$, then $\mathcal{E}^{Q}_{P,n-j}(X)$ is over-convergent for all $0\le j\le n-1$ and therefore, $\mathcal{E}^{Q}_{P,n-j}((z_jX)^{q^j})$ is over-convergent for all $0\le j\le n-1$. From Proposition \ref{ExpFPConv} we know that $\exp_{F_P}(X)$ converges on the disc $\{x\in\mathbb{C}_p:v_p(x)>1/e_K(q-1)\}$. Therefore, all the power series in (\ref{final_exp}) are over-convergent. Also all the power series in (\ref{final_exp}) have positive valuations when evaluated at $1$, so we can use formal group operations on these values. Evaluating (\ref{final_exp}) at $1$ we get $\mathcal{E}^{Q}_{P,n-j}(z_j)$. Combining this with (\ref{second_exp}) we then get
$$[\sum_{i=0}^{n-1}z_i\pi^i]_P(\mathcal E^{Q}_{P,n}(1))=\mathcal E^{Q}_{P,n}(z_0)+_{F_P}\mathcal E^{Q}_{P,n-1}(z_1)+_{F_P}\ldots+_{F_P}\mathcal E^{Q}_{P,1}(z_{n-1})\enspace.$$\end{proof3}
 
 \subsection{Local Galois module structure in weakly ramified extensions}\label{Galmod}

As mentioned in the introduction, one of the main motivations for the generalisation of Dwork and Pulita's power series has come from recent progress with open questions on Galois module structure. 

\

First, let $E/F$ be a finite odd degree Galois extension of number fields, with Galois group~$G$ and rings of integers~$\bo_E$ and $\bo_F$. From
Hilbert's formula for the valuation of the different $\D_{E/F}$ (\cite{serre}, IV, \S2, Prop.4), we know that the valuation of $\D_{E/F}$ will be even at every prime ideal of $\bo_E$ and we can define the
square-root of the inverse different $\A_{E/F}$ to be the unique
fractional $\bo_E$-ideal such that  
$$\A_{E/F}^2=\D_{E/F}^{-1}\enspace.$$
Erez has  proved that $\A_{E/F}$ is locally free over
$\bo_F[G]$ if and only if $E/F$ is at most weakly ramified, \textit{i.e.}, the second ramification groups are trivial at every prime \cite{erez2}~; however, the question of whether $\A_{E/F}$ is free over $\Z[G]$ still remains open. 
The tame case has been solved by Erez~\cite{erez2}. Now, it is possible for weakly ramified extensions to be wildly ramified, and here new obstructions arise.

\vskip1mm

Using Fr\"ohlich's classic \textit{Hom-description} approach, see \cite{Frohlich-Alg_numb}, it is possible to reduce this problem to carrying out calculations at a local level. The key to these local calculations is to have an explicit description of an integral normal basis generator for the square-root of the inverse different in weakly ramified extensions of local fields. It is also possible to reduce this problem further to considering only totally ramified extensions (see \cite[\S6]{erez2} and \cite[\S3]{PickettVinatier}). 

\vskip1mm

Precisely, let $M$ be the unique degree $p$ extension of $\Q_p$ contained in $\Q_p(\zeta_{p^2})$; this extension is totally, weakly and wildly ramified. First, in \cite{erez}, Erez proves that the element
$$\frac{1+Tr_{\Q_p(\zeta_{p^2})/M}(\zeta_{p^2})}{p}$$ 
is an integral normal basis generator for the square-root of the inverse different of $M/\Q_p$. In~\cite{Vinatier_jnumb}, Vinatier uses Erez's basis to prove that $\A_{E/F}$ is free over $\Z[G]$ with $F=\Q$ whenever the 
decomposition group at every wild place is abelian. Then, in~\cite{Pickett}, Pickett uses the trace map and special values of 
Dwork's power series to generalise Erez's basis to degree~$p$ extensions of an unramified extension of $\Q_p$ that are 
contained in certain Lubin-Tate extensions. In~\cite{PickettVinatier}, Pickett and Vinatier use Pickett's bases to prove that 
$\A_{E/F}$ is free over $\Z[G]$ under certain conditions on both the decomposition groups and base field. 

\vskip1mm

Following these results, we shall give explicit descriptions of integral normal basis generators for the square-root of the inverse different in abelian totally, weakly and wildly ramified extensions of any finite extension of $\Q_p$.
 
 \
 
 Another application is concerned with the Galois module structure of the valuation ring over its associated order in extensions of local fields. Precisely, let $L/K$ be a finite Galois extension of $p$-adic fields, with Galois group $G$. We denote by $\bo_K\subset \bo_L$ the corresponding valuation rings, and by $\mathfrak A_{L/K}$ the associated order of $\bo_L$ in the group algebra $K[G]$, that is 
$$\mathfrak A_{L/K}=\{\lambda \in K[G] \: : \: \lambda \bo_L \subset \bo_L\}.$$
This is an $\bo_K$-order of $K[G]$, and the unique one over which $\bo_L$ could be free as a module. When the extension $L/K$ is  at most tamely ramified, the equality $\mathfrak A_{L/K}=\bo_K[G]$ holds, and $\bo_L$ is $\mathfrak A_{L/K}$-free according to Noether's criterion. But when wild ramification is permitted, the structure of $\bo_L$ 
as an $\mathfrak A_{L/K}$-module is much more difficult to determine (see, e.g.,~\cite{Lara10} for an exposition of recent progress in this topic). 
A $p$-adic version of Leopoldt's theorem asserts that the ring $\bo_L$ is $\mathfrak A_{L/K}$-free whenever $K=\Q_p$ and $G$ is abelian. However, the field $\Q_p$ is actually the only base field which satisifes this property. One extension of this result is due to Byott~(\cite{Byott-Int_Gal_Mod_Struc_Some_LubinTate}, Cor.~4.3): If $L/K$ is an abelian extension of $p$-adic fields, then $\bo_L$ is free as a module over its associated order $\mathfrak A_{L/K}$ whenever the extension $L/K$ is totally and weakly ramified. We shall construct explicit generators of the valuation ring over its associated order in maximal abelian totally, weakly and wildly ramified extensions of $K$, using the description of such extensions that comes from Proposition~\ref{AbTotWildWeak} below.
 
 \

 These applications are closely related to each other. Their content is resumed in Theorem~\ref{Thm4}, which we prove in this section.
 
 \

Again, we denote by $K$ a finite extension of $\Q_p$, with valuation ring $\bo_K$, maximal ideal $\bp_K$, and residue field $k$.  We write $\text{card}(k)=q$. Let $\pi$ be a uniformising parameter of $K$. By standard theory we know that the Lubin-Tate extension $K_{\pi,2}/K$ is abelian and that $[K_{\pi,2}:K]=q(q-1)$. We can therefore define $M_{\pi,2}$ as the unique sub-extension of $K_{\pi,2}/K$ such that $[M_{\pi,2}:K]=q$. 
The following result is a direct consequence of Theorem~1.1 of \cite{PickettVinatier2}, however the present paper was not published at the time of writing, so we include another proof.
 
 \begin{proposition}\label{AbTotWildWeak}
 Every maximal abelian totally, wildly and weakly ramified extension of $K$ is equal to $M_{\pi,2}$ for some uniformising parameter $\pi$.
 \end{proposition}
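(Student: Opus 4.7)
The plan is to translate the conditions on $L/K$ into conditions on its norm subgroup via local class field theory, then identify the possible norm subgroups with those of the $M_{\pi',2}$.

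First I would set $G = \Gal(L/K)$, let $N := N_{L/K}(L^\times) \subseteq K^\times$ be the norm group of a finite abelian extension $L/K$, and invoke local reciprocity $K^\times/N \cong G$. Under this isomorphism: $L/K$ is totally ramified iff $N$ contains some uniformiser of $K$; $L/K$ is wildly ramified iff $[K^\times:N]$ is divisible by $p$; and for each integer $v \geq 0$, the image of $U_K^{(v)}$ in $K^\times/N$ corresponds to the upper-numbering ramification group $G^v$. In particular $G^v = \{1\}$ if and only if $U_K^{(v)} \subseteq N$.

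Next, I would translate the weakly ramified condition $G_2 = \{1\}$ into an inclusion on $N$, with a twist: Hasse--Arf (applicable since $G$ is abelian) forces every upper-numbering jump to be an integer. If $L/K$ were totally ramified abelian with $G_1 \neq G_0$ and $G_2 = \{1\}$, then Herbrand's function gives a first upper jump at $\varphi(1) = 1/[G_0:G_1]$, which is an integer only when $G_0 = G_1$. So an abelian, totally ramified, weakly ramified extension containing any wild ramification is automatically totally wildly ramified. For such an $L$ the unique upper jump lies at $v = 1$, and therefore the weakly ramified condition becomes exactly $U_K^{(2)} \subseteq N$; conversely, any $N \supseteq U_K^{(2)}$ containing a uniformiser with $K^\times/N$ a $p$-group defines an abelian, totally, wildly and weakly ramified extension.

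For the maximality step, I would exploit the decomposition
\[
K^\times/U_K^{(2)} \;\cong\; \langle \pi\rangle \;\times\; \mu_{q-1} \;\times\; V,\qquad V := U_K^{(1)}/U_K^{(2)},
\]
in which $V$ is elementary abelian of order $q$ and carries all the $p$-part. A norm group $N$ satisfying the three conditions above and of minimal index must have $[K^\times:N] = q$ and correspond to a surjection $K^\times/U_K^{(2)} \twoheadrightarrow V$ killing the prime-to-$p$ factor $\mu_{q-1}$. A direct computation shows that the kernel of every such surjection has the form $\langle \pi'\rangle \cdot \mu_{q-1}\cdot U_K^{(2)}$ for a suitable uniformiser $\pi' = \pi w$ of $K$ (with $w \in U_K^{(1)}$ chosen so that $\pi'$ lies in the kernel). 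This is precisely the norm group of the degree-$q$ subextension $M_{\pi',2}$ of $K_{\pi',2}$, since $\Gal(K_{\pi',2}/K) \cong \mu_{q-1} \times V$ under the reciprocity map $\pi' \mapsto 1$, and $M_{\pi',2}$ is cut out by the subgroup $\mu_{q-1}$. Local class field theory then gives $L = M_{\pi',2}$.

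The main obstacle is the Hasse--Arf step: showing that the tame inertia quotient must be trivial. Once that integrality argument is in hand, the rest reduces to an elementary calculation in the finite abelian group $K^\times/U_K^{(2)}$ and a direct comparison with the norm group of $M_{\pi',2}$.
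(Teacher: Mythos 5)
Your argument is correct and takes a genuinely different route from the paper. The paper's proof is short because it black-boxes the key containment: it cites Lemma~4.2 of Byott~\cite{Byott-Int_Gal_Mod_Struc_Some_LubinTate} to place any abelian totally, wildly, weakly ramified $M/K$ inside some $K_{\pi,2}$ (hence, by degree, inside $M_{\pi,2}$), and then verifies that $M_{\pi,2}$ is itself weakly ramified by reading off the upper-numbering jumps of $K_{\pi,2}/K$ from standard Lubin--Tate theory and invoking Herbrand. You instead carry out the whole classification directly in terms of norm subgroups via local class field theory: you translate the three ramification conditions into constraints on $N = N_{L/K}(L^\times)$, use Hasse--Arf to force the tame quotient $G_0/G_1$ to vanish, identify the smallest admissible $N$ inside $K^\times/U_K^{(2)}$, and match it against the (known, and computed in \S 4.1 of the paper) norm group $\langle\pi'\rangle\mu_{q-1}U_K^{(2)}$ of $M_{\pi',2}$. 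This buys you a self-contained argument that effectively re-derives Byott's lemma, at the cost of a bit more bookkeeping in $K^\times/U_K^{(2)}$; the paper's version is shorter but relies on the citation.

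Two small points to tighten. First, ``of minimal index'' should read ``of maximal index'' (or ``with $N$ minimal''): a \emph{maximal} extension corresponds to a \emph{smallest} norm subgroup, hence \emph{largest} $[K^\times:N]$. Second, the equivalence ``$L/K$ wildly ramified $\iff p \mid [K^\times:N]$'' is valid only under the standing assumption that $L/K$ is totally ramified (so that $[L:K]=e(L/K)$); stated in isolation it would fail, e.g., for the unramified extension of degree $p$. Neither issue affects the substance of the argument.
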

 
 \begin{proof}
 Let $M$ be an abelian totally, wildly and weakly ramified extension of $K$.
  From \cite[Lemma 4.2]{Byott-Int_Gal_Mod_Struc_Some_LubinTate}, $M$ must be contained in $K_{\pi,2}$ for some $\pi$. Since it is of degree a power of $p$ over $K$, it is thus contained in $M_{\pi,2}$.
  
  Now, the extension  $M_{\pi,2}/K$ is clearly abelian totally and wildly ramified.  We are thus left with proving that it is weakly ramified. Since this extension is totally and wildly ramified, the numbers $-1$ and $0$ are neither lower jumps, nor upper jumps. By standard Lubin-Tate theory (see \cite{Iwasawa}, \S7), we know that the jumps of $K_{\pi,2}/K$ occur at 0 and 1 in the upper numbering. Therefore, by Herbrand's theorem, $1$ is the only upper jump (and so lower jump) of $M_{\pi,2}/K$, as required.
   \end{proof}

We can now prove Theorem~\ref{Thm4}.

 \begin{proof4}
 As $M_{\pi,2}/K$ is totally ramified and of degree $q$, we know that $e(M_{\pi,2}/K)=q$. Therefore, from \cite[Theorem~1]{Byott-Int_Gal_Mod_Struc_Some_LubinTate} we know that any element of $M_{\pi,2}$ with valuation $1$ must be a generator of $\mathcal \bo_{M_{\pi,2}}$ over its associated order in $M_{\pi,2}/K$, and from
 \cite[Corollary 2.5(i)]{Vinatier-3} that any element of $M_{\pi,2}$ with valuation $1-q$ must be an integral normal basis generator for $\A_{M_{\pi,2}/K}$. Therefore, it suffices to prove that the trace element $Tr_{K_{\pi,2}/M}(\mathcal{E}^{Q}_{P,2}(1))$ is a uniformising parameter of $M_{\pi,2}$.
 
 \

 First, the polynomial $f(X):=\frac{P(P(X))}{P(X)}$ is of degree $q(q-1)$, and it is irreducible since it is Eisenstein over~$K$: Indeed, $f(X)=P(X)^{q-1}+\sum_{i=2}^{q-1}a_iP(X)^{i-1}+\pi$ and each $a_i$ is divisible by $\pi$ because $P$ is a Lubin-Tate polynomial. Therefore, since $f(\mathcal{E}^Q_{P,2}(1))=0$, $f$ is the minimal polynomial of $\mathcal{E}_{P,2}^Q(1)$. Thus, $Tr_{K_{\pi,2}/K}(\mathcal{E}^{Q}_{P,2}(1))$ is the coefficient of $X^{q(q-1)-1}$ in $f(X)$, \textit{i.e.}, $Tr_{K_{\pi,2}/K}(\mathcal{E}^{Q}_{P,2}(1))=(q-1)a_{q-1}$. In particular, $Tr_{K_{\pi,2}/K}(\mathcal{E}^{Q}_{P,2}(1))$ has valuation~$1$ in $K$.

 \

 Now, we denote by $d$ the valuation of the different of the extension $M_{\pi,2}/K$. According to the characterisation of the different \cite[Ch. III, \S 3, Prop.~7]{serre}, we have 
 $$\forall i \in \Z, \quad Tr_{M_{\pi,2}/K}(\bp_{M_{\pi,2}}^{-2d+i})=\bp_K^{\lfloor \frac{i}{e_{M_{\pi,2}/K}} \rfloor},$$
 where $\lfloor x \rfloor$ denotes the largest integer $n$ with $n \leq x$. 
 
 \
 
 Since the extension $M_{\pi,2}/K$ is totally and weakly ramified of degree~$q$, we have $d=2(q-1)$, by Hilbert's formula for the valuation of the different. In particular, for $i=2q-1$, the previous relation gives the identity $Tr_{M_{\pi,2}/K}(\bp_{M_{\pi,2}})=\bp_K$, and for $i \geq 2q$, it proves that $Tr_{M_{\pi,2}/K}(\bp_{M_{\pi,2}}^2) \subseteq \bp_K^2$.
 
 \

 On the other hand, since the extension $K_{\pi,2}/M_{\pi,2}$ is tamely ramified, and because $\mathcal E_{P,2}^Q(1)$ is a uniformising parameter of $K_{\pi,2}$ by Theorem~2, we know that the element $Tr_{K_{\pi,2}/M_{\pi,2}}(E_{P,2}^Q(1))$ lies in $\bp_{M_{\pi,2}}$.
 
 \

 Now, from the transitivity of the trace we also have
 $$Tr_{K_{\pi,2}/K}(\mathcal{E}^{Q}_{P,2}(1))=Tr_{M_{\pi,2}/K}(Tr_{K_{\pi,2}/M_{\pi,2}}(\mathcal{E}^{Q}_{P,2}(1)))\enspace, $$ 
 and so, we can conclude that $Tr_{K_{\pi,2}/M_{\pi,2}}(\mathcal{E}^{Q}_{P,2}(1))$ is indeed a uniformising parameter in the field $M_{\pi,2}$. 
 \end{proof4}
 
 \

 We shall close this paper with the following corollary to Theorem~\ref{Thm4}, which gives explicit integral normal basis generators for the square root of the inverse different 
 in every abelian totally, wildly and weakly ramified extensions of~$K$. 
 
 \begin{corollary}\label{CorThm2}[Corollary to Theorem~\ref{Thm4}] Let $M$ be an intermediate subfield of the extension $M_{\pi,2}/K$. Let $\alpha_{\pi,2}$ equal either $\frac{Tr_{K_{\pi,2}/M}(\mathcal{E}^{Q}_{P,2}(1))}{\pi}$ or $\frac{Tr_{K_{\pi,2}/M}(\mathcal{E}^{Q}_{P,2}(1))+q}{\pi}$. 
 
 Then, 
 $\alpha_{\pi,2}$ is an integral normal basis generator for $\A_{M/K}$.
 \end{corollary}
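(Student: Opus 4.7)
The plan is to extend the proof of Theorem~\ref{Thm4} to every intermediate field $M$ by splitting the trace $Tr_{K_{\pi,2}/M}$ through $M_{\pi,2}$ and then applying Vinatier's criterion for normal integral bases of the square root of the inverse different.

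First, I would observe that $M/K$ inherits from $M_{\pi,2}/K$ the properties of being abelian, totally ramified, wildly ramified and weakly ramified (for the last, the second ramification group of $M/K$ is a subquotient of that of $M_{\pi,2}/K$, hence trivial). By Hilbert's formula, $v_M(\D_{M/K}) = 2([M:K]-1)$, and hence $\A_{M/K} = \bp_M^{1-[M:K]}$. Set $\beta_M := Tr_{K_{\pi,2}/M}(\mathcal{E}^{Q}_{P,2}(1))$. The key computation is $v_M(\beta_M) = 1$. For the lower bound, I would use transitivity: $\beta_M = Tr_{M_{\pi,2}/M}\bigl(Tr_{K_{\pi,2}/M_{\pi,2}}(\mathcal{E}^{Q}_{P,2}(1))\bigr)$, and by Theorem~\ref{Thm4} Part~1 the inner trace is a uniformiser of $M_{\pi,2}$, in particular an element of $\bp_{M_{\pi,2}}$. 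Since $\bp_{M_{\pi,2}}$ is stable under $\Gal(M_{\pi,2}/M)$, the sum of conjugates $\beta_M$ lies in $\bp_{M_{\pi,2}} \cap M = \bp_M$.

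For the upper bound, I would transitivise in the other direction: $Tr_{M/K}(\beta_M) = Tr_{K_{\pi,2}/K}(\mathcal{E}^{Q}_{P,2}(1)) = (q-1) a_{q-1}$ by the Eisenstein polynomial argument of the proof of Theorem~\ref{Thm4}, which has $K$-valuation $1$ under the hypothesis $v_p(a_{q-1}) = v_p(\pi)$. The characterisation of the different combined with $v_M(\D_{M/K}) = 2([M:K]-1)$ then yields $Tr_{M/K}(\bp_M^2) \subseteq \bp_K^2$, forcing $\beta_M \notin \bp_M^2$ and hence $v_M(\beta_M) = 1$. Both candidates $\alpha_{\pi,2}$ then satisfy $v_M(\alpha_{\pi,2}) = 1 - [M:K]$, because $v_M(\pi) = [M:K]$ and $v_M(q) \geq [M:K]$ so that adding $q$ does not affect the valuation; in particular $\alpha_{\pi,2} \in \A_{M/K} \setminus \bp_M \A_{M/K}$. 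Applying \cite[Corollary 2.5(i)]{Vinatier-3} to $M/K$, exactly as in the proof of Theorem~\ref{Thm4} Part~2, promotes this to the assertion that each $\alpha_{\pi,2}$ is an integral normal basis generator for $\A_{M/K}$ over $\bo_K[\Gal(M/K)]$.

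The main obstacle is securing the lower bound $v_M(\beta_M) \geq 1$: the direct trace-valuation inequality applied to $K_{\pi,2}/M$ becomes too weak once $M \subsetneq M_{\pi,2}$, because the different of $K_{\pi,2}/M$ picks up a large tame contribution from $K_{\pi,2}/M_{\pi,2}$. Splitting the trace through $M_{\pi,2}$ and invoking Theorem~\ref{Thm4} Part~1 is what keeps the bookkeeping under control. One must also carry along the implicit hypothesis that $p$ is odd, inherited from Theorem~\ref{Thm4} Part~2 via Vinatier's generator criterion.
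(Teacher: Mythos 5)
Your valuation computation $v_M\bigl(Tr_{K_{\pi,2}/M}(\mathcal{E}^Q_{P,2}(1))\bigr)=1$ is correct, and the trick of splitting the trace through $M_{\pi,2}$ to get the lower bound, combined with transitivity to $K$ and the different characterisation for the upper bound, is exactly the right way to transport the computation from $M_{\pi,2}$ down to $M$. This is a genuinely more explicit route than the paper's one-line proof, which simply invokes Theorem~\ref{Thm4} together with Vinatier \cite[Corollary~2.5(ii)]{Vinatier-3}; your reworking makes visible what the paper takes for granted.

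However, two points need repair. First, the final invocation of Vinatier \cite[Corollary~2.5(i)]{Vinatier-3} applied ``to $M/K$'' is a misattribution: the paper is careful to use part~(i) only for the maximal extension $M_{\pi,2}/K$ inside the proof of Theorem~\ref{Thm4}, and switches to part~(ii) precisely when passing to intermediate subfields. The two parts are evidently not interchangeable in Vinatier's paper, and if part~(i) is stated only for the maximal weakly ramified subextension, then applying it to a proper intermediate $M$ is not justified; you should be citing part~(ii), which is the result tailored to the intermediate-field situation. Second, the claim that ``the second ramification group of $M/K$ is a subquotient of that of $M_{\pi,2}/K$'' is false as a general statement about lower-numbered ramification groups, which are compatible with subgroups but not with quotients. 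The correct argument goes through upper numbering and Herbrand's theorem, exactly as the paper does in the proof of Proposition~\ref{AbTotWildWeak}: since $M_{\pi,2}/K$ is totally and wildly ramified with its only upper (hence lower) jump at $1$, the same holds for the quotient $M/K$, giving $G_2=1$ and $v_M(\D_{M/K})=2([M:K]-1)$. The conclusion you need is right, but the justification you gave is not.
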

 
 \begin{proof}
 This follows directly from Theorem~\ref{Thm4} and (\cite{Vinatier-3}, Corollary 2.5(ii)).
 \end{proof}
 
 \bibliography{bib}
\end{document}